\newtheorem{thm}{Theorem}[section]
\newtheorem{lm}{Lemma}[section]
\newtheorem{pr}{Proposition}[section]
\theoremstyle{definition}
\newtheorem{df}{Definition}[section]
\newtheorem{rem}{Remark}[section]
\numberwithin{equation}{section}
\DeclareMathOperator{\Der}{Der}
\DeclareMathOperator{\Coker}{Coker}
\DeclareMathOperator{\Ker}{Ker}
\newcommand*{\ptens}[1]{\mathop{\widehat\otimes}_{#1}}
\let \al         =\alpha
\let \be         =\beta
\let \de         =\delta
\let \te         =\theta        
\let \io         =\iota
\let \la         =\lambda
\let \si         =\sigma
\let \De         =\Delta
\title{The Arens--Michael envelope of a solvable Lie algebra is a homological epimorphism}
\author{O. Yu. Aristov}
\address{Institute for Advanced Study in Mathematics of Harbin Institute of Technology, Harbin 150001, China;
\newline\indent
Suzhou Research Institute of Harbin Institute of Technology, Suzhou 215104, China}
\email{aristovoyu@inbox.ru}
\begin{document}
\begin{abstract}
The Arens--Michael envelope of the universal enveloping algebra of a finite-dimensional complex Lie algebra is a homological epimorphism if and only if the Lie algebra is solvable. The necessity was proved by Pirkovskii in [Proc. Amer. Math. Soc. 134, 2621--2631, 2006]. We prove the sufficiency.
\end{abstract}
 \maketitle
\markright{The Arens--Michael envelope of a solvable Lie algebra}

\section*{Introduction}

We are interested in the question under what conditions, for a given finite-dimensional complex Lie algebra~$\mathfrak{g}$, the universal completion (called the Arens--Michael envelope) of  $U(\mathfrak{g})$ (its universal enveloping algebra) is a homological epimorphism. This question is implicitly contained in a paper of J.\,Taylor published in early 70s \cite{T2} and explicitly formulated by Pirkovskii in \cite[\S\,9, Problem~1]{Pir_stbflat}.

\subsection*{Homological epimorphisms}
Flat homomorphisms play an important role in algebra and analysis but in some respects the requirement of flatness is too strong and must be replaced by a weaker condition. The concept of homological epimorphism was introduced by J.\,Taylor \cite{T2} as a weakened form of flatness under the name of ``absolute  localization'' and has been rediscovered several times  in different contexts and under different names --- ``lifting'', ``stably flat homomorphism'', ``isocohomological morphism'', ``homotopy epimorphism''; see references in \cite[Remark 3.16]{AP}. An important property for applications is that homological epimorphisms preserve Hochschild homologies and cohomologies  \cite[Propositions 1.4 and 1.7]{T2}. We are interested in the functional analytic version of this concept that goes back to Taylor and consider homological epimorphisms of complete locally convex algebras with jointly continuous multiplication (we call them $\mathop{\widehat\otimes}$-algebras); see Definition~\ref{Homoepi}.

An example of the use of homological epimorphisms is contained in Meyer's preprint \cite{Me04}, where  Connes's calculation of the cyclic cohomologies of smooth non-commutative tori in \cite[\S\,6]{Co85} is simplified. In addition, it was noted by Pirkovskii in~\cite{Pi99} that flat homomorphisms are not sufficient for characterization of open embeddings of Stein varieties but a weak version of the notion of homological epimorphism is adequate for this task; for an extension of results to general Stein spaces see \cite{AP,BBK18}. Another motivation for studying of  homological epimorphisms in the context of functional analysis comes from non-commutative spectral theory; see  the introduction to \cite{Pir_stbflat} for a detailed discussion and interpretation  of Taylor's ideas.

\subsection*{Arens--Michael envelopes}
The idea to study Arens--Michael envelopes and, in particular, to look for conditions under which they are homological epimorphisms also belongs to Taylor. Recall that the Arens--Michael envelope of a topological algebra~$A$ is the universal completion with respect to the class of Banach algebras (equivalently, the completion relative to the locally convex topology determined by all continuous submultiplicative seminorms on~$A$); see details in \S\,\ref{s:HASPAMe}.   Roughly speaking, this completion is responsible for continuous representations of~$A$ on Banach spaces. A prototype example of an Arens--Michael envelope is the embedding of the polynomial algebra $\mathbb{C}[z_1,\ldots,z_n]$ to the algebra of entire functions $\mathcal{O}(\mathbb{C}^n)$.

Taylor found explicit descriptions of the Arens--Michael envelopes of certain finitely-generated associative algebras and proved that in some cases the envelope is a homological epimorphism; also, he provided first counterexamples~\cite{T2}.

\subsection*{History and the main result}

Let $U(\mathfrak{g})$ be the universal enveloping algebra of a finite-dimensional complex Lie algebra  $\mathfrak{g}$.
The question of whether the Arens--Michael envelope homomorphism $U(\mathfrak{g})\to \widehat U(\mathfrak{g})$ is a homological epimorphism has been studied since 70s. The affirmarive answer in the case when $\mathfrak{g}$ is abelian, and the negative answer in the case when it is semisimple, was given by Taylor in~\cite{T2}.  Three decades later, in~\cite{Pi4}, Pirkovskii showed that the \emph{solvability a necessary condition}. At the same time, the study of the nilpotent case was started by Dosiev \cite{Do03,Do09}. It was continued by Pirkovskii in~\cite{Pir_stbflat} and recently completed by the author in~\cite{ArAMN}, where it was shown that the nilpotency is a sufficient condition. Thus, only the case when $\mathfrak{g}$ is solvable but not nilpotent remained uninvestigated (with one exception --- the two-dimensional non-abelian algebra~\cite{Pir_stbflat}). Here we prove  that the \emph{solvability is also a sufficient condition};  see Theorem~\ref{AMheiffsol}.  Thus, the final answer is that $U(\mathfrak{g})\to \widehat U(\mathfrak{g})$ is a homological epimorphism if and only if~$\mathfrak{g}$ is solvable.

The following papers of the author precede this work.

\begin{itemize}
\item
In \cite{ArAMN}, the nilpotent case is considered. The structure of $\widehat U(\mathfrak{g})$ is described and proved that  the Arens--Michael envelope is a homological epimorphism (using a method different from that applied in this article).
\item
In \cite{ArAnF}, the structure of the algebra of analytic  functionals on a connected complex Lie group is discussed. In particular, the results in~\cite{ArAnF} can be applied to $\widehat U(\mathfrak{g})$ for a general finite-dimensional Lie algebra. Some improvements is obtained in \cite{AHHFG}.
\item
A decomposition into an iterated analytic smash products (which is essential here) is constructed in \cite{Ar_smash}.
\item
The decomposition mentioned above exists not only for the Arens--Michael envelope but for other  completions. Some preliminary results on these completions is proved in \cite{ArPiLie,ArLfd}. Formally, \cite{Ar_smash} is based on them but, in fact, the case of the Arens--Michael envelope can be studied without reference to \cite{ArPiLie,ArLfd}.
\end{itemize}

Our main result is of independent interest but, on the other hand, this text is a part of a big project on completions of universal enveloping algebras and algebras of analytic functionals. Homological properties related with the first topic is a subject of this paper. The author plans to study homological properties of algebras of analytic functionals and their completions in a separate article.

The result was announced at the seminar ``Algebras in Analysis'' at Moscow State University in 2018. The author later discovered a gap in the proof proposed at that time. This text presents a different argument.

\tableofcontents

\section{Main ideas of the proof}

The proof is based on two circles of sets: the author's results on decomposing of $\widehat U(\mathfrak{g})$ into an iterated analytic smash product obtained in~\cite{Ar_smash} and  a powerful technique of relative homological epimorphisms
developed by Pirkovskii in~\cite{Pir_qfree}. In fact, the analytic part of preliminary work is contained in~\cite{Ar_smash} and preceding articles.   The technique of Pirkovskii, which is applied below, is more algebraic in nature.

When studying homological epimorphisms, the question of finding a convenient projective resolution is important. To construct such a resolution one can use a decomposition of $U(\mathfrak{g})$ into an iterated smash product.
In group homology, a  construction of a resolution of the trivial module of a semi-direct product  is  well known (see, e.g., \cite[Chapter\,5, \S\,2, pp.\,250--251]{Ku06}). It is not hard to generalize this construction to cocommutative  Hopf algebras and transfer it to the context of functional analysis. (This topic will be discussed elsewhere.) Unfortunately, it is not clear whether or not the tensor product functor in~\eqref{tenprfu} sends such projective resolution to a similar resolution. In \cite{Pir_qfree},  to overcome  this kind of difficulties, it was proposed to use relative homological homomorphisms instead of usual. Below we show  that, with appropriate modification, this idea works well in our case.

\subsection*{Outline of Pirkovskii's results}
In  \cite[Theorem 9.12]{Pir_qfree}, Pirkovskii proved that the Arens--Michael envelope is a homological epimorphism for a number of finitely-generated non-commutative algebras. In this paper, we are interested only in generalizing of Parts~(vi) and~(vii) of the above theorem, the parts that concern  the two-dimensional non-abelian and three-dimensional Heisenberg Lie algebra. This section contains an extract of the argument in these cases. We postpone the necessary definitions until \S\,\ref{s:rsfhe}.

In both the two-dimensional and three-dimensional cases considered in~\cite{Pir_qfree}, a decomposition of $\mathfrak{g}$ into a semidirect sum induces a decomposition of $U(\mathfrak{g})$ into an Ore extension of a commutative algebra~$R$. Namely, $U(\mathfrak{g})$ is isomorphic to $\mathbb{C}[y][x; \de]$ with $\de=y\frac d{dy}$ in the first case and $\mathbb{C}[y,z][x; \de]$ with $\de=z\frac \partial{\partial y} $  in the second. As a corollary, $\widehat U(\mathfrak{g})$ can be represented as an analytic Ore extension $\mathcal{O}(\mathbb{C},R_\de; \widehat\de)$, where $R_\de$ is a certain completion of $R$ and $\widehat\de$ is the extension of $\de$ to $R_\de$ \cite[Theorem 5.1]{Pir_qfree}. (Here and below $\mathcal{O}$ stands for holomorphic functions.)

Pirkovskii used a two-step argument. First he showed that $R\to R_\de$ is a homological epimorphism and next that the Arens--Michael envelope homomorphism $\iota\!:U(\mathfrak{g})\to\widehat U(\mathfrak{g})$ is a one-sided relative homological epimorphism with respect to~$R_\de$. Both conditions are included into the hypotheses of the following result.

\begin{thm}\label{PiTh}
\cite[Theorem 9.1(ii)]{Pir_qfree}
Let $(f,g)\!: (A,R)\to (B,S)$ be an $R$-$S$-homomorphism from an $R$-$\mathop{\widehat\otimes}$-algebra $A$ to an $S$-$\mathop{\widehat\otimes}$-algebra $B$. Suppose that

\emph{(1)}~$g$ is a homological epimorphism;

\emph{(2)}~$f$ is a left or right relative homological epimorphism;

\emph{(3)}~$A$ is projective in $R{\mbox{-}\!\mathop{\mathsf{mod}}}$ and $B$ is projective in $S{\mbox{-}\!\mathop{\mathsf{mod}}}$;

\emph{(4)}~$A$ is an $R$-$\mathop{\widehat\otimes}$-algebra of $(f_2)$-finite type.

Then $f$ is a homological epimorphism.
\end{thm}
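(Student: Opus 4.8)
The plan is to prove Theorem~\ref{PiTh}, which asserts that under hypotheses (1)--(4) the homomorphism $f$ is a homological epimorphism. Recall that $f\!:A\to B$ is a homological epimorphism precisely when the canonical map $B\ptens{A}B\to B$ is an isomorphism and, more strongly, when the derived functor version $B\ptens{A}^{\mathbf L}B\to B$ is an isomorphism in the derived category; equivalently, the induced map on $\mathop{\mathrm{Tor}}$ and $\mathop{\mathrm{Ext}}$ groups is what one expects from a localization. The strategy is to reduce this \emph{absolute} homological statement for $f$ over $A$ to the \emph{relative} statement (hypothesis (2)) together with the homological epimorphism $g$ over $R$ and $S$ (hypothesis (1)), using the projectivity and finite-type conditions (3)--(4) as the glue that makes relative and absolute derived functors agree.

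First I would set up the base-change comparison. The key observation is that, because $A$ is projective as an $R$-module and $B$ is projective as an $S$-module (hypothesis (3)), the relative $\mathop{\mathrm{Tor}}$ functors $\mathop{\mathrm{Tor}}^{A,R}_\bullet$ and $\mathop{\mathrm{Tor}}^{B,S}_\bullet$ control the absolute ones through a spectral sequence or base-change exact sequence; this is the point where Pirkovskii's machinery of relative quasi-free algebras enters. Concretely, I would express the derived tensor product $B\ptens{A}^{\mathbf L}B$ by choosing a relatively projective resolution of $B$ as an $A$-$A$-bimodule (available because $f$ is an $(f_2)$-finite-type morphism, hypothesis (4), which guarantees a well-behaved bimodule resolution of finite type), and then compare it with the corresponding object computed over the base pair. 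The role of hypothesis (4) is to ensure that the relative bar-type resolution is genuinely a resolution by objects that remain projective after the relevant completions, so that no Tor-obstruction is hidden in the passage to the completion.

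Next I would combine the two inputs. Hypothesis (2) says $f$ is a one-sided relative homological epimorphism, i.e.\ $B\ptens{A}^{\mathbf L}B\to B$ is an isomorphism \emph{relative to the base}, while hypothesis (1) says $g\!:R\to S$ is an honest homological epimorphism, i.e.\ $S\ptens{R}^{\mathbf L}S\to S$ is an isomorphism. The heart of the argument is to show that these two isomorphisms assemble, via the base-change isomorphism from the previous step, into the absolute isomorphism $B\ptens{A}^{\mathbf L}B\cong B$. I expect this to run through a factorization of $B\ptens{A}B$ that interpolates $S\ptens{R}B$ (or $B\ptens{R}S$) as an intermediate term, using projectivity to identify the derived functors with the underived ones at the appropriate stage and using $(f_2)$-finiteness to control the interchange of the two completed tensor products.

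The main obstacle, as flagged in the introduction, is exactly the interaction between the relative and absolute levels in the \emph{topological} setting: in purely algebraic homological algebra such a patching is routine, but here every tensor product is a completed projective tensor product $\ptens{}$, and exactness of $\ptens{}$ and the preservation of projectivity are delicate. The crucial technical step will therefore be verifying that the relevant complexes remain admissible (topologically split) exact sequences of $\ptens{}$-modules after the base change, so that the derived functors are well defined and the spectral-sequence or long-exact-sequence comparison does not collapse for analytic reasons. Hypotheses (3) and (4) are precisely what is needed to secure this admissibility; once that is in hand, the algebraic assembly of (1) and (2) into the conclusion is formal.
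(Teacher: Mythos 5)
First, a point of reference: the paper does not prove Theorem~\ref{PiTh} at all --- it is quoted verbatim from Pirkovskii \cite[Theorem 9.1(ii)]{Pir_qfree} and used as a black box. So there is no internal proof to compare against; your proposal has to stand on its own as a reconstruction of Pirkovskii's argument, and as it stands it does not.

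The proposal is a plan, not a proof, and the gap is precisely at the step you defer. Every load-bearing claim is phrased as ``I would express\ldots'', ``I expect this to run through\ldots'', ``the crucial technical step will therefore be verifying\ldots'', and the final sentence asserts that ``the algebraic assembly of (1) and (2) into the conclusion is formal'' --- but that assembly is the entire content of the theorem. Concretely, what is missing is the mechanism by which hypotheses (1), (3) and (4) interact. The actual argument runs as follows: by (4) one chooses a projective resolution of $A$ in $(A,R){\mbox{-}\!\mathop{\mathsf{mod}}\!\mbox{-}}(A,R)$ whose terms are retracts of $A\ptens{R}R^n\ptens{R}A$; applying $B\ptens{A}(-)\ptens{A}B$ turns these into retracts of $B\ptens{R}R^n\ptens{R}B$; hypothesis (1) (that $g$ is a homological epimorphism, hence in particular $B\ptens{R}B\to B\ptens{S}B$ behaves well) together with (3) (projectivity of $B$ over $S$) identifies these with absolutely projective $B$-bimodules, while (2) supplies the splitting of the resulting complex in the relative category, which (1) and (3) then upgrade to a $\mathbb{C}$-split (admissible) sequence. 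None of this is visible in your text: you never say which resolution you take, you never say what $(f_2)$-finiteness buys beyond ``a well-behaved bimodule resolution'', and you never explain how the homological epimorphism property of $g$ is actually consumed --- ``assemble via the base-change isomorphism'' names the difficulty rather than resolving it. A secondary inaccuracy: your opening characterization of a homological epimorphism leads with the condition that $B\ptens{A}B\to B$ be an isomorphism, which is the (ordinary) epimorphism condition; the homological condition in Definition~\ref{Homoepi} is the acyclicity of $A$ under the two-sided functor $B\ptens{A}(-)\ptens{A}B$, i.e.\ admissibility of the image of a projective bimodule resolution, and it is this admissibility that the whole proof must establish.
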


Returning to low-dimensional Lie algebras, consider $R$-$R_\de$-homomorphism
$$
(\io,g)\!: (U(\mathfrak{g}),R)\to (\widehat U(\mathfrak{g}),R_\de),
$$
where  $g$ denotes the homomorphism $R\to R_\de$. To prove that $\io$ is a homological epimorphism it suffices to check Conditions~(1)--(4) in Theorem~\ref{PiTh} for $(\io,g)$.

For Ore extensions and analytical Ore extensions, Condition~(3)  easily follows from definitions.
Note that Ore extensions have two useful properties: they are not only of $(f_2)$-finite type but also relatively quasi-free; see Proposition~\ref{Oeqfft} below. In particular, Condition~(4) holds. Checking of Conditions~(1) and~(2) is more challenging. To prove  Condition~(2) Pirkovskii used relatively quasi-free algebras. (Note that the use of the relative quasi-freeness was a significant innovation in this topic.) In particular, the following result is important.

\begin{thm}\label{AMerelhoep}
\cite[Theorem 7.6(ii)]{Pir_qfree}
Let $A$, $R$, $S$ be $\mathop{\widehat\otimes}$-algebras and $g\!: R \to S$ an epimorphism of $\mathop{\widehat\otimes}$-algebras and let $\iota_A\!: A \to \widehat A$ denote the Arens--Michael envelope. Suppose that $A$ is an $R$-$\mathop{\widehat\otimes}$-algebra and $\widehat A$ is an $S$-$\mathop{\widehat\otimes}$-algebra such that the pair $(\iota_A,g)$ is an $R$-$S$-homomorphism. Assume also that $A$ is relatively quasi-free over $R$. Then $\iota_A$ is a two-sided relative homological epimorphism.
\end{thm}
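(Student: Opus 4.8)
The plan is to check directly the defining condition of a (two-sided) relative homological epimorphism, namely that the $R$-relative derived tensor product $\widehat A \mathop{\widehat\otimes}^{\mathbf L}_A \widehat A$ is concentrated in degree zero and canonically isomorphic to $\widehat A$. Relative quasi-freeness is what makes this tractable: by hypothesis $A$ is relatively quasi-free over $R$, so the multiplication sequence
$$
0 \to \Omega^1_R A \to A \ptens{R} A \xrightarrow{\ \mu\ } A \to 0
$$
is an admissibly exact sequence of $A$-$\mathop{\widehat\otimes}$-bimodules in which $A \ptens{R} A$ is the free relative bimodule and the bimodule of relative differential forms $\Omega^1_R A$ is relatively projective. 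This is a relatively projective bimodule resolution of $A$ of length one, so the $R$-relative bidimension of $A$ is at most $1$; consequently all higher relative $\operatorname{Tor}$ groups automatically vanish and only degrees $0$ and $1$ need to be controlled.

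First I would apply the functor $\widehat A \ptens{A}(-)\ptens{A}\widehat A \cong \widehat A^{\mathrm e}\ptens{A^{\mathrm e}_R}(-)$, where $A^{\mathrm e}_R := A \ptens{R} A^{\mathrm{op}}$, to the resolution above. Since relatively projective bimodules are sent to relatively projective $\widehat A$-bimodules and the sequence is $R$-split, this produces a two-term complex
$$
\widehat A \ptens{A}\Omega^1_R A\ptens{A}\widehat A \xrightarrow{\ d\ } \widehat A \ptens{R}\widehat A
$$
whose homology computes $\widehat A \mathop{\widehat\otimes}^{\mathbf L}_A \widehat A$, with $H_0 = \Coker d = \widehat A \ptens{A}\widehat A$. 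At this point the remaining hypotheses enter. Because $(\iota_A,g)$ is an $R$-$S$-homomorphism, the $R$-module structure on $\widehat A$ factors through $g$; and because $g$ is an epimorphism of $\mathop{\widehat\otimes}$-algebras one has $S \ptens{R} S \cong S$, whence the canonical map gives an isomorphism $\widehat A \ptens{R}\widehat A \cong \widehat A \ptens{S}\widehat A$. Thus the target of $d$ is identified with the free $S$-relative $\widehat A$-bimodule, and its multiplication $\mu$ fits into the exact sequence $0 \to \Omega^1_S\widehat A \to \widehat A \ptens{S}\widehat A \xrightarrow{\mu} \widehat A \to 0$ defining $\Omega^1_S\widehat A := \Ker\mu$.

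The decisive step, which I expect to be the main obstacle, is to establish a natural isomorphism
$$
\widehat A \ptens{A}\Omega^1_R A\ptens{A}\widehat A \;\cong\; \Omega^1_S \widehat A
$$
under which $d$ becomes the canonical inclusion $\Omega^1_S\widehat A \hookrightarrow \widehat A \ptens{S}\widehat A$. I would prove this by comparing corepresented functors: for an $S$-relative $\widehat A$-bimodule $M$ the left-hand side corepresents $\Der_R(A,M)$ (with $A$ acting through $\iota_A$) while $\Omega^1_S\widehat A$ corepresents $\Der_S(\widehat A,M)$, so it suffices to show that restriction along $\iota_A$ identifies $\Der_S(\widehat A,M)$ with $\Der_R(A,M)$. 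Injectivity is clear since $\iota_A(A)$ is dense and derivations are continuous; the content is the extension of a derivation $\delta\colon A \to M$ to $\widehat A$, which I would obtain from the universal property of the Arens--Michael envelope applied to the square-zero extension $\widehat A \ltimes M$ (an Arens--Michael algebra when $M$ is a Banach bimodule, and the general case by passing to the inverse limit of Banach bimodules). Granting this identification, the two-term complex is isomorphic to $\Omega^1_S\widehat A \to \widehat A \ptens{S}\widehat A$, so $d$ is injective, giving $H_1 = 0$, and $H_0 = \Coker d = \widehat A \ptens{A}\widehat A \cong \widehat A$. Since the whole computation is carried out at the level of $\widehat A$-bimodules and is symmetric in the two $A$-actions, this is precisely the two-sided relative homological epimorphism condition. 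The points demanding the most care will be the admissibility and exactness statements for the completed tensor products and the verification that the differential-forms isomorphism is compatible with the boundary map $d$.
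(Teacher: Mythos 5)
Your proposal follows essentially the same route as this paper's proof of the more general Theorem~\ref{Brelhoep} (the paper itself only quotes Theorem~\ref{AMerelhoep} from Pirkovskii and instead proves and uses that generalization): the length-one resolution $0\leftarrow A\leftarrow A\ptens{R}A\leftarrow \Omega^1_R A\leftarrow 0$ supplied by relative quasi-freeness, the identification $\widehat A\ptens{R}\widehat A\cong\widehat A\ptens{S}\widehat A$ coming from $g$ being an epimorphism, and the key isomorphism $\widehat A\ptens{A}\Omega^1_R A\ptens{A}\widehat A\cong\Omega^1_S\widehat A$ obtained by comparing corepresented derivation functors --- which is precisely the content of Propositions~\ref{GUDEA} and~\ref{GUDEB}, with the extension of derivations along $\iota_A$ being Property (UDE) for Arens--Michael envelopes.

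Two points need repair. First, your extension argument produces a continuous derivation $\widehat A\to M$ extending $\delta$, but you must still verify that it is an $S$-derivation, i.e., that the bijection holds between $\Der_S(\widehat A,M)$ and $\Der_R(A,M)$ rather than between the absolute derivation spaces; this uses that $g$ is an epimorphism (or has dense range) together with the density of $\iota_A(A)$, and is exactly Proposition~\ref{GUDEA}. (Also, the reduction of the square-zero-extension argument to Banach bimodules is not automatic, since a general $\widehat A$-$\mathop{\widehat\otimes}$-bimodule need not be an inverse limit of Banach bimodules; this is why the paper simply invokes Pirkovskii's theorem that Arens--Michael envelopes satisfy Property (UDE).) Second, and more substantively, your conclusion is drawn from the vanishing of $H_1$ and the computation of $H_0$, whereas Definition~\ref{twsrelhoep} does not ask for exactness of the image complex: it requires that the image of a projective resolution \emph{split} in $S{\mbox{-}\!\mathop{\mathsf{mod}}\!\mbox{-}} S$, and in this topological relative setting exactness is strictly weaker than admissibility. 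The gap closes with what you already have: once the image complex is identified with the canonical sequence $0\leftarrow\widehat A\leftarrow\widehat A\ptens{S}\widehat A\leftarrow\Omega^1_S\widehat A\leftarrow 0$, that sequence splits in $\widehat A{\mbox{-}\!\mathop{\mathsf{mod}}\!\mbox{-}} S$ by the universal property of $\Omega^1_S$ (Pirkovskii's Proposition~7.2), hence in $S{\mbox{-}\!\mathop{\mathsf{mod}}\!\mbox{-}} S$; this is how the paper concludes.
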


In the proof, it is essential  that $\iota_A\!: A \to \widehat A$ satisfies to the unique extension property for derivations (Property (UDE)); see the corresponding definition in \S\,\ref{s:UDE}.

Verifying Condition~(1) is arduous since $R\to R_\de$ is not an Arens--Michael envelope in general. To surmount this challenge Pirkovskii applied another technique, which we do not use in this paper; see details in the proof of  Parts~(vi) and~(vii) of Theorem~9.12 in~\cite{Pir_qfree}.

In the high-dimensional case, it is natural to use induction.
Specifically,  when an algebra $A$ has a chain of subalgebras $\mathbb{C}= R_0\subset  R_1 \subset\cdots\subset R_n = A$ such that an iterative application of Theorem~\ref{PiTh} is possible, we have a tool for proving that $A\to \widehat A$ is a homological epimorphism; see the introduction to~\cite{Pir_qfree}. This plan works well in some situations but  unfortunately cannot be directly applied to all high-dimensional solvable Lie algebras. Indeed, such an algebra admits an iterated semidirect sum decomposition,
$$
\mathfrak{g}=((\cdots (\mathfrak{f}_1 \rtimes \mathfrak{f}_2)\rtimes\cdots)\rtimes \mathfrak{f}_n,
$$
where $\mathfrak{f}_1,\ldots,\mathfrak{f}_n$ are $1$-dimensional. Moreover, this decomposition actually induces an decomposition of the universal enveloping algebra into  an iterated Ore extension; see, e.g., \cite[pp. 33--34, 1.7.11(iv)]{MR87}. But, in this case, $R_{n-1}\to (R_{n-1})_\de$ is not usually an Arens--Michael envelope and, moreover, $(R_{n-1})_\de$ cannot be represented as an analytic Ore extension. So we need modifications.

\subsection*{Plan of the proof}

Our strategy is to apply Theorem~\ref{PiTh} using iterations. But the scheme proposed in~\cite{Pir_qfree} needs to be changed, as is clear from the previous discussion.

1.~We use iterated analytic smash products instead of analytic Ore extensions; see Theorem~\ref{AMede}. (An analytic Ore extension with trivial twisting is an analytic smash product but when the twisting is non-trivial this is not the case; thus most of parts of Theorem 9.12 in~\cite{Pir_qfree} are not covered by our approach.) Some preliminaries on Hopf $\mathop{\widehat\otimes}$-algebras, analytic smash products and Arens--Michael envelopes are contained in~\S\,\ref{s:HASPAMe}.

2.~We use completions of $\mathbb{C}[z]$ other than $\mathcal{O}(\mathbb{C})$, namely, the algebras $\mathfrak{A}_s$ defined in~\eqref{faAsdef}. The essential point is that $\mathcal{O}(\mathbb{C})\to\mathfrak{A}_s$ and some induced homomorphisms of analytic smash product satisfy Property (UDE). To prove theses facts we apply an auxiliary result on pushouts; see Theorem~\ref{UDEinh} and preparatory results in~\S\,\ref{s:HASPAMe}.

3.~We get an Arens--Michael envelope only on the final step of iteration. So we have to replace Theorem~\ref{AMerelhoep} by a more general assertion on homomorphisms satisfying Property (UDE); see Theorem~\ref{Brelhoep}. Combining it with a decomposition result essentially proved in~\cite{Ar_smash} (Theorem~\ref{AMede}), we finally deduce Theorem~\ref{AMheiffsol}.

\section{Hopf $\mathop{\widehat\otimes}$-algebras, analytic smash products and Arens--Michael envelopes}
\label{s:HASPAMe}

Consider the bifunctor $(-)\mathbin{\widehat{\otimes}} (-)$ of complete projective tensor product on the category of complete locally convex spaces and Hopf algebras in the corresponding symmetric monoidal category. We call them \emph{Hopf $\mathbin{\widehat{\otimes}}$-algebras} (read `topological Hopf algebras'); see \cite{Lit} or \cite{Pir_stbflat}.
Also, we deal with $\mathbin{\widehat{\otimes}}$-algebras and $\mathbin{\widehat{\otimes}}$-(bi)modules, i.e.,
complete locally convex associative algebras and (bi)modules with jointly continuous multiplication. We assume that each algebra contains an identity and each module is unital.  A $\mathbin{\widehat{\otimes}}$-(bi)module over a $\mathbin{\widehat{\otimes}}$-algebra~$A$ is referred as an $A$-$\mathbin{\widehat{\otimes}}$-(bi)module. We also assume that $\mathop{\widehat\otimes}$-algebra homomorphisms  preserve identity   and are continuous (as well as  $\mathbin{\widehat{\otimes}}$-module morphisms).

\subsection*{Generalized Sweedler notation}
In the Hopf algebra theory, a  notation introduced by Sweedler is widely used instead of the tensor notation in cases when the latter is not convenient. It was noted in  \cite[\S\,2.4]{Ak08} that this notation can be  generalized to topological Hopf algebras. A version of the generalized Sweedler notation sufficient for our purposes is described in~\cite{Ar_smash} and we briefly recall it here.

In the classical Sweedler notation, the comultiplication $\De$ on a Hopf algebra is written~as
\begin{equation}\label{SwDe}
\De(h)=\sum h_{(1)}\otimes h_{(2)},
\end{equation}
where an arbitrary representation of $\De(h)$ is taken.

Before using this type of notation  for Hopf $\mathop{\widehat\otimes}$-algebras note that, in the Fr\'echet space context, we can treat~\eqref{SwDe} not as a finite sum but as a convergent series. However, in general  this is not always possible and we write $\De(h)$ as the limit of a net of finite sums of elementary tensors:
$$
\De(h)=\lim_\nu \sum_{i=1}^{n_\nu} h_{(1)}^{\nu,i}\otimes h_{(2)}^{\nu,i}.
$$
In the case of a Hopf $\mathop{\widehat\otimes}$-algebra, \eqref{SwDe} is a short form of the  above formula.

For example, in the full form, the coassociativity axiom, $(1\otimes \De)\De(h)=(\De\otimes 1)\De(h)$, can be written as
\begin{multline}\label{detcoass}
\lim_\nu \left(\sum_{i=1}^{n_\nu} h_{(1)}^{\nu,i}\otimes \left(\lim_\mu \sum_{j=1}^{m_\mu} (h_{(2)}^{\nu,i})_{(1)}^{\mu,j}\otimes (h_{(2)}^{\nu,i})_{(2)}^{\mu,j}\right)\right)=\\
=
\lim_\nu \left(\left(\lim_\la \sum_{j=1}^{l_\la} (h_{(1)}^{\nu,i})_{(1)}^{\la,j}\otimes (h_{(1)}^{\nu,i})_{(2)}^{\la,j}\right)\otimes\sum_{i=1}^{n_\nu} h_{(2)}^{\nu,i}\right),
\end{multline}
where
$$
\De(h_{(1)}^{\nu,i})=\lim_\la \sum_{j=1}^{l_\la} (h_{(1)}^{\nu,i})_{(1)}^{\la,j}\otimes (h_{(1)}^{\nu,i})_{(2)}^{\la,j}\quad
\text{and}\quad
\De(h_{(2)}^{\nu,i})=\lim_\mu \sum_{j=1}^{m_\mu} (h_{(2)}^{\nu,i})_{(1)}^{\mu,j}\otimes (h_{(2)}^{\nu,i})_{(2)}^{\mu,j}.
$$
In the generalized Sweedler notation, it takes the same form as in the classical:
$$
\sum h_{(1)}\otimes \left(\sum(h_{(2)})_{(1)}\otimes (h_{(2)})_{(2)}\right)= \left(\sum(h_{(1)})_{(1)}\otimes (h_{(1)})_{(2)}\right)\otimes h_{(2)}.
$$
By the iterated limit theorem \cite[Chapter~2, p.\,69, Theorem~4]{Kel}, both iterated limits in~\eqref{detcoass} can be replaced by a simple limit of a net. In particular, this means that the iteration  of the comultipication can be written as
$$
(1\otimes \De)\De(h)=\sum h_{(1)}\otimes h_{(2)}\otimes h_{(3)},
$$
where a simple limit is also implied.

Also, the antipode axiom for a Hopf $\mathop{\widehat\otimes}$-algebra takes the standard form
$$
\sum S(h_{(1)})h_{(2)}=\varepsilon(h)1=\sum h_{(1)}S(h_{(2)}).
$$
This formula stands for
$$
\lim_\nu \sum_{i=1}^{n_\nu} S(h_{(1)}^{\nu,i})h_{(2)}^{\nu,i}=\varepsilon(h)1=\lim_\nu \sum_{i=1}^{n_\nu} h_{(1)}^{\nu,i}S(h_{(2)}^{\nu,i}).
$$

\subsection*{Analytic smash products}

In this section, we recall the necessary information on analytic smash products, which were introduced  in~\cite{Pi4}.
In the definitions we follow \cite{Ar_smash}, where some historical remarks can also be found.

Let $H$ be a Hopf $\mathbin{\widehat{\otimes}}$-algebra and $A$ a $\mathbin{\widehat{\otimes}}$-algebra that is a left $H$-$\mathbin{\widehat{\otimes}}$-module.
Then $A\mathbin{\widehat{\otimes}} A$ and $\mathbb{C}$ are left $H$-$\mathbin{\widehat{\otimes}}$-modules with respect to the actions given by
$$
h\cdot(a\otimes b)\!:=\sum (h_{(1)}\cdot a)\otimes  (h_{(2)}\cdot b)
\qquad (h\in H,\,a,b\in A);
$$
$$
h\cdot\lambda\!:=\varepsilon(h)\lambda, \qquad (h\in H,\,\lambda\in\mathbb{C}).
$$

Recall that~$A$ is called a (left)  \emph{$H$-$\mathbin{\widehat{\otimes}}$-module algebra} if the linearized multiplication $\mu\!:A\mathbin{\widehat{\otimes}} A\to A$ and the unit map $\mathbb{C}\to A$ are left $H$-module morphisms, i.e.,
\begin{equation}\label{Hmodalgcond}
h\cdot(ab)=\sum (h_{(1)}\cdot a)(h_{(2)}\cdot b)\quad\text{and}\quad h\cdot 1=\varepsilon(h)1\qquad(h\in H,\,a,b\in A).
\end{equation}

For a  $\mathbin{\widehat{\otimes}}$-algebra homomorphism $\psi\!:H\to B$ consider the
\emph{adjoint action} of $H$ on $B$:
\begin{equation}\label{adac}
h\cdot b \!:=\sum \psi(h_{(1)})\,b\,\psi(S(h_{(2)}))\qquad (h\in H,\,b\in B).
\end{equation}
It is easy to see that $B$ is an $H$-$\mathbin{\widehat{\otimes}}$-module algebra with respect to this action.

\begin{df}\label{PsmaDef}
Let $H$ be a Hopf $\mathbin{\widehat{\otimes}}$-algebra and $A$ an $H$-$\mathbin{\widehat{\otimes}}$-module algebra. The \emph{analytic smash product}  $A\mathop{\widehat{\#}} H$ is defined as a $\mathbin{\widehat{\otimes}}$-algebra endowed with $\mathbin{\widehat{\otimes}}$-algebra homomorphisms
$i\!:A\to A\mathop{\widehat{\#}} H$ and $j\!:  H\to A\mathop{\widehat{\#}} H$ such that the following conditions hold.

(A)~$i$ is an $H$-$\mathbin{\widehat{\otimes}}$-module algebra homomorphism with respect to the adjoint action associated with~$j$.

(B)~For every $\mathbin{\widehat{\otimes}}$-algebra $B$, $\mathbin{\widehat{\otimes}}$-algebra homomorphism $\psi\!:  H\to  B$  and $H$-$\mathbin{\widehat{\otimes}}$-module algebra homomorphism $\varphi\!: A\to B$ (with respect to the adjoint action associated with~$\psi$) there is a unique
$\mathbin{\widehat{\otimes}}$-algebra homomorphism  $\theta$ such that the diagram
\begin{equation*}
   \xymatrix{
 & A\mathop{\widehat{\#}} H \ar@{-->}[dd]^{\theta}& \\
A \ar[ur]^i \ar[dr]_\varphi&&H\ar[ul]_j\ar[dl]^\psi\\
& B &}
\end{equation*}
is commutative.
\end{df}

The explicit construction is as follows. The formula
\begin{equation}\label{exmusmpr}
(a\otimes h)(a'\otimes h')=\sum a(h_{(1)}\cdot a')\otimes h_{(2)}h'\qquad(h,h'\in H;\,a,a'\in A)
\end{equation}
determines a multiplication on $A\mathbin{\widehat{\otimes}}H$.
Endowed with the maps $i\!:a\mapsto a\otimes 1$ and $j\!:h\mapsto 1\otimes h$ and this multiplication, $A\mathbin{\widehat{\otimes}}H$  is an analytic smash product.

\begin{rem}\label{OeAsp}
In the case when $H=\mathcal{O}(\mathbb{C})$ and the action of $H$ is induced by a derivation,
the analytic smash product coincides with the  corresponding analytic Ore extension
\cite[Remarks~4.4 and~4.2]{Pir_qfree}. The same holds in the pure algebraic case, i.e., when the action of $\mathbb{C}[z]$ is induced by a derivation.
\end{rem}

The following lemma follows directly from the definitions.
\begin{lm}\label{homsmpr}
\cite[Lemma 3.10]{Ar_smash}
Let $H$ and $K$ be Hopf $\mathbin{\widehat{\otimes}}$-algebras, $R$ a $H$-$\mathbin{\widehat{\otimes}}$-module algebra and $S$ a $K$-$\mathbin{\widehat{\otimes}}$-module algebra. If  $\be\!:H\to K$ is a Hopf $\mathbin{\widehat{\otimes}}$-algebra homomorphism  and  $\al\!: R\to S$ is a $\mathbin{\widehat{\otimes}}$-algebra homomorphism  that is an  $H$-$\mathbin{\widehat{\otimes}}$-module morphism \emph{(}i.e.,
$\al(h\cdot r)=\be(h)\cdot\al(r)$ when $h\in H$, $r\in R$\emph{)}, then the formula
\begin{equation}\label{morsmpr}
\al\mathop{\widehat{\#}} \be\!:R\mathop{\widehat{\#}} H\to S\mathop{\widehat{\#}} K\!:r\otimes
h\mapsto \al(r)\otimes \be(h)
\end{equation}
determines a $\mathbin{\widehat{\otimes}}$-algebra homomorphism.
\end{lm}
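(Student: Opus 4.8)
The plan is to verify that $\al \mathop{\widehat{\#}} \be$, which is by construction nothing but the continuous linear map $\al \mathbin{\widehat{\otimes}} \be\colon R \mathbin{\widehat{\otimes}} H \to S \mathbin{\widehat{\otimes}} K$, is a homomorphism of $\mathbin{\widehat{\otimes}}$-algebras for the smash-product multiplications. Continuity and linearity come for free from the functoriality of $(-)\mathbin{\widehat{\otimes}}(-)$, and the identity is preserved since $(\al \mathbin{\widehat{\otimes}} \be)(1 \otimes 1) = \al(1) \otimes \be(1) = 1 \otimes 1$, because $\al$ and $\be$ preserve identities. Thus the only real content is multiplicativity, which I would first establish on elementary tensors and then extend by continuity.

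For the computation, I apply $\al \mathop{\widehat{\#}} \be$ to the product \eqref{exmusmpr} in $R \mathop{\widehat{\#}} H$:
\begin{equation*}
(\al \mathop{\widehat{\#}} \be)\bigl((r \otimes h)(r' \otimes h')\bigr) = \sum \al\bigl(r(h_{(1)} \cdot r')\bigr) \otimes \be(h_{(2)} h').
\end{equation*}
Now I use the three hypotheses in turn: since $\al$ is a $\mathbin{\widehat{\otimes}}$-algebra homomorphism, $\al(r(h_{(1)} \cdot r')) = \al(r)\,\al(h_{(1)} \cdot r')$; since $\al$ intertwines the actions via $\be$, $\al(h_{(1)} \cdot r') = \be(h_{(1)}) \cdot \al(r')$; and since $\be$ is an algebra homomorphism, $\be(h_{(2)} h') = \be(h_{(2)})\be(h')$. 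This turns the right-hand side into $\sum \al(r)\,(\be(h_{(1)}) \cdot \al(r')) \otimes \be(h_{(2)})\,\be(h')$. On the other hand, expanding the product of the images in $S \mathop{\widehat{\#}} K$ gives
\begin{equation*}
(\al(r) \otimes \be(h))(\al(r') \otimes \be(h')) = \sum \al(r)\,(\be(h)_{(1)} \cdot \al(r')) \otimes \be(h)_{(2)}\,\be(h').
\end{equation*}
The two expressions coincide precisely because $\be$, being a Hopf $\mathbin{\widehat{\otimes}}$-algebra homomorphism, is in particular a coalgebra homomorphism, so that $\sum \be(h)_{(1)} \otimes \be(h)_{(2)} = \sum \be(h_{(1)}) \otimes \be(h_{(2)})$, i.e.\ $\De_K \be = (\be \mathbin{\widehat{\otimes}} \be)\De_H$. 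To pass from elementary tensors to arbitrary elements, I note that both $(u,v) \mapsto (\al \mathop{\widehat{\#}} \be)(uv)$ and $(u,v) \mapsto (\al \mathop{\widehat{\#}} \be)(u)\,(\al \mathop{\widehat{\#}} \be)(v)$ are jointly continuous bilinear maps on $(R \mathop{\widehat{\#}} H) \times (R \mathop{\widehat{\#}} H)$ (joint continuity of multiplication is part of the definition of a $\mathbin{\widehat{\otimes}}$-algebra, and $\al \mathop{\widehat{\#}} \be$ is continuous); since they agree on the dense linear span of elementary tensors, they agree everywhere.

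The only genuine subtlety, and the step I would treat with most care, is the legitimacy of the Sweedler manipulations in the topological setting, where $\De(h)$ is the limit of a net of finite sums of elementary tensors rather than a finite sum. Each identity used above, namely the application of $\al$, of the module action, and of $\be$, together with the coalgebra-homomorphism property of $\be$, is an identity obtained by applying a \emph{continuous} map to such a net; I would justify each interchange of the map with the net-limit by continuity, exactly as the generalized Sweedler notation of \S\,\ref{s:HASPAMe} is designed to permit, using the joint continuity of the multiplications and of the $H$- and $K$-actions. Since all the structure maps in sight are continuous, no analytic difficulty arises beyond this bookkeeping, which is why the lemma follows directly from the definitions.
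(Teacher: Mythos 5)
Your proof is correct and is essentially the argument the paper intends: the paper gives no proof beyond the remark that the lemma ``follows directly from the definitions'' (citing Lemma~3.10 of \cite{Ar_smash}), and your direct verification of multiplicativity on elementary tensors via the explicit formula \eqref{exmusmpr}, using that $\be$ is a coalgebra morphism and that $\al$ intertwines the actions, followed by extension by density and joint continuity, is exactly that verification. Your care with the generalized Sweedler notation (interchanging continuous maps with net limits) is the right way to handle the only genuine subtlety.
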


\subsection*{Smash product decomposition of Arens--Michael envelopes}

In what follows we need a family of Hopf $\mathop{\widehat\otimes}$-algebras importance of which for finding Arens--Michael envelopes was discovered in  \cite{ArRC} and \cite{AHHFG}.

For $s\in[0,+\infty)$ put
\begin{equation}
 \label{faAsdef}
\mathfrak{A}_s\!:=\Bigl\{a=\sum_{n=0}^\infty  a_n z^n\! :
\|a\|_{r,s}\!:=\sum_{n=0}^\infty |a_n|\,\frac{r^n}{n!^s}<\infty
\quad\forall r>0\Bigr\},
\end{equation}
where $z$ is treated as a formal variable, and endow $\mathfrak{A}_s$ with the topology determined by the seminorm family $(\|\cdot\|_{r,s};\,r>0)$. Denote also $\mathbb{C}[[z]]$, the algebra of all formal power series in $z$, by $\mathfrak{A}_\infty$.

\begin{lm}\label{CzAsHh}
Let $s\in[0,+\infty]$. Then $\mathfrak{A}_s$ is a Hopf $\mathop{\widehat\otimes}$-algebra with respect to the operations continuously extended from $\mathbb{C}[z]$ and so the natural embedding $\be\!:\mathbb{C}[z]\to \mathfrak{A}_s$ is a Hopf $\mathop{\widehat\otimes}$-algebra homomorphism.
\end{lm}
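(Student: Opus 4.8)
The plan is to equip $\mathbb{C}[z]$ with its standard cocommutative Hopf algebra structure --- the one making the generator $z$ primitive, so that $\Delta(z)=z\otimes 1+1\otimes z$, $\varepsilon(z)=0$ and $S(z)=-z$ --- and to show that each structure map is continuous for the seminorms $\|\cdot\|_{r,s}$ of \eqref{faAsdef}, hence extends to $\mathfrak{A}_s$. Since the polynomials are dense in $\mathfrak{A}_s$ (each $a=\sum_n a_n z^n$ is the limit of its partial sums in every seminorm) and $\mathfrak{A}_s$ is complete (a reduced projective limit of the weighted spaces $\ell^1(r^n/n!^s)$), these extensions are unique, and the Hopf axioms, being continuous identities valid on the dense subalgebra $\mathbb{C}[z]$, pass automatically to $\mathfrak{A}_s$. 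The assertion about $\beta$ is then immediate: $\beta$ is the inclusion, and the extended operations restrict to the original ones on $\mathbb{C}[z]$.

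First I would check that $\mathfrak{A}_s$ is a $\mathbin{\widehat\otimes}$-algebra. Using $(m+n)!\ge m!\,n!$ one obtains submultiplicativity of each seminorm,
\[
\|ab\|_{r,s}\le\sum_{m,n}|a_m|\,|b_n|\,\frac{r^{m+n}}{(m+n)!^{s}}\le\|a\|_{r,s}\,\|b\|_{r,s},
\]
which shows both that the $\|\cdot\|_{r,s}$ are submultiplicative (so $\mathfrak{A}_s$ is an Arens--Michael algebra) and that multiplication is jointly continuous.

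The main step is the continuity of $\Delta$. Here I would first describe $\mathfrak{A}_s\mathbin{\widehat\otimes}\mathfrak{A}_s$ concretely: since the projective tensor product of weighted $\ell^1$-spaces is again a weighted $\ell^1$-space and $\mathfrak{A}_s$ is a nuclear Fr\'echet space, $\mathfrak{A}_s\mathbin{\widehat\otimes}\mathfrak{A}_s$ is the space of double sequences $(c_{j,k})$ with finite seminorms $\|c\|^{\otimes}_{r,s}=\sum_{j,k}|c_{j,k}|\,\frac{r^j}{j!^{s}}\,\frac{r^k}{k!^{s}}$. For $a=\sum_n a_n z^n$ the image $\Delta(a)$ has coefficients $c_{k,n-k}=\binom{n}{k}a_n$, so
\[
\|\Delta(a)\|^{\otimes}_{r,s}=\sum_n|a_n|\,r^n\sum_{k=0}^n\binom{n}{k}\frac{1}{k!^{s}\,(n-k)!^{s}}.
\]
The crux is the combinatorial estimate $n!^{s}\sum_{k=0}^n\binom{n}{k}\frac{1}{k!^{s}(n-k)!^{s}}=\sum_{k=0}^n\binom{n}{k}^{1+s}\le 2^{(1+s)n}$, which follows from $\binom{n}{k}\le 2^n$. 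This yields $\|\Delta(a)\|^{\otimes}_{r,s}\le\|a\|_{2^{1+s}r,\,s}$, so $\Delta$ is continuous.

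Finally, $\varepsilon(a)=a_0$ satisfies $|\varepsilon(a)|\le\|a\|_{r,s}$, and $S(a)=\sum_n(-1)^n a_n z^n$ is an isometry for each seminorm, so both extend continuously. Assembling these facts, all structure maps are continuous, the Hopf identities extend from $\mathbb{C}[z]$ by density, and $\beta$ is a Hopf $\mathbin{\widehat\otimes}$-algebra homomorphism. The case $s=\infty$, where $\mathfrak{A}_\infty=\mathbb{C}[[z]]$ carries the topology of coefficientwise convergence and $\mathfrak{A}_\infty\mathbin{\widehat\otimes}\mathfrak{A}_\infty=\mathbb{C}[[z_1,z_2]]$, is handled by the same but simpler argument, since each coefficient of $\Delta(a)$, $\varepsilon(a)$ and $S(a)$ depends (through a finite subsum) continuously on the coefficients of $a$. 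I expect the $\Delta$ estimate to be the main obstacle: pinning down the tensor-product seminorm and controlling the binomial coefficients; everything else is routine.
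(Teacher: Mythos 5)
Your proof is correct, but the paper does not actually argue this lemma in the text: for $s\neq\infty$ it simply cites \cite[Example 2.4]{AHHFG}, and for $s=\infty$ it says the claim ``can be directly checked.'' So your write-up is a self-contained substitute for an external reference rather than a variant of an argument given here. The substance is right: the Hopf structure is the primitive one on $z$ (consistent with $\mathfrak{A}_s$ arising as a completion of $U(\mathfrak{f})\cong\mathbb{C}[z]$), submultiplicativity of $\|\cdot\|_{r,s}$ follows from $(m+n)!\geq m!\,n!$ since $s\geq 0$, and the only nontrivial point is continuity of $\Delta$, which your estimate
$\sum_{k=0}^{n}\binom{n}{k}^{1+s}\leq 2^{(1+s)n}$ (via $\binom{n}{k}^{s}\leq 2^{ns}$) settles, giving $\|\Delta(a)\|^{\otimes}_{r,s}\leq\|a\|_{2^{1+s}r,s}$. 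One small remark: you do not really need the full identification of $\mathfrak{A}_s\mathbin{\widehat\otimes}\mathfrak{A}_s$ as a weighted $\ell^1$ space of double sequences (which you justify via nuclearity); for continuity of $\Delta$ an upper bound suffices, and the cross-seminorm property of the projective tensor seminorm already gives $\|\Delta(a)\|^{\otimes}_{r,s}\leq\sum_n|a_n|\sum_k\binom{n}{k}\|z^k\|_{r,s}\|z^{n-k}\|_{r,s}$, which is exactly the quantity you estimate. The counit, antipode, density of polynomials, and the extension of the Hopf identities by continuity are all handled correctly, as is the degenerate case $s=\infty$.
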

Here  we assume that $\mathbb{C}[z]$ is endowed with the strongest locally convex topology and hence it is a $\mathop{\widehat\otimes}$-algebra with respect to this topology \cite[Proposition 2.3]{Pir_qfree}.
In what follows we assume that all associative algebras of countable dimension are endowed with the strongest locally convex topology.

The assertion of the lemma is proved in  \cite[Example 2.4]{AHHFG} when $s\ne\infty$ and can be directly checked when $s=\infty$.

Recall that an \emph{Arens--Michael algebra} is a complete topological algebra whose topology can be determined by a family of submultiplicative seminorms.
The \emph{Arens--Michael envelope} of a topological algebra $A$ is a pair $(\widehat A, \io_A)$, where $\widehat A$ is an Arens--Michael algebra and $\io_A$ is a continuous homomorphism $A \to \widehat A$ such that for every Arens--Michael algebra~$B$ and every continuous homomorphism $\varphi\!: A \to B$ there is a unique continuous homomorphism
$\widehat\varphi\!:\widehat A\to B$ making the diagram
\begin{equation*}
  \xymatrix{
A \ar[r]^{\io_A}\ar[rd]_{\varphi}&\widehat A\ar@{-->}[d]^{\widehat\varphi}\\
 &B\\
 }
\end{equation*}
commutative. More concretely, we can take as $\io_A$ the completion homomorphism with respect to the topology determined by all possible continuous submultiplicative seminorms on~$A$.

In the Lie algebra case,  we can replace  $U(\mathfrak{g})$ by~$\mathfrak{g}$ in the diagram (assuming that $\varphi$ is a Lie algebra homomorphism) and say that  $\widehat U(\mathfrak{g})$ is the Arens--Michael envelope of~$\mathfrak{g}$.

Our proof of the main result, Theorem~\ref{AMheiffsol}, is based on the following structural theorem, which is easily implied by  results
in~\cite{Ar_smash}.

\begin{thm}\label{AMede}
Let $\mathfrak{g}$ be a finite-dimensional solvable complex Lie algebra. Then there is an iterated semidirect sum decomposition \begin{equation}\label{fgdec}
\mathfrak{g}=((\cdots (\mathfrak{f}_1 \rtimes \mathfrak{f}_2)\rtimes\cdots)\rtimes \mathfrak{f}_n,
\end{equation}
where $\mathfrak{f}_1,\ldots,\mathfrak{f}_n$ are $1$-dimensional, and a non-increasing sequence $i_1,\ldots,i_{n}$  in $[0,\infty]$
such that
 \begin{equation}\label{expfsmp}
\widehat{U}(\mathfrak{g})\cong(\cdots (\mathfrak{A}_{i_1} \mathop{\widehat{\#}}\mathfrak{A}_{i_2})
\mathop{\widehat{\#}}\cdots)\mathop{\widehat{\#}}\mathfrak{A}_{i_n}
\end{equation}
Moreover,  the homomorphism $U(\mathfrak{g}) \to \widehat{U}(\mathfrak{g})$ is compatible with the decomposition
$$
U(\mathfrak{g})\cong(\cdots (U(\mathfrak{f}_1) \mathop{\#}U(\mathfrak{f}_2))\mathop{\#} \cdots)\mathop{\#} U(\mathfrak{f}_n)
$$
induced by~\eqref{fgdec}.
\end{thm}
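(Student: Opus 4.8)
The plan is to assemble the statement in three layers --- an algebraic skeleton, a factorwise computation of the envelope, and a monotone reassembly --- taking the iterated analytic smash product from \cite{Ar_smash} as the main analytic input. For the skeleton, recall that since $\mathfrak{g}$ is solvable over $\mathbb{C}$, a standard consequence of Lie's theorem applied to the adjoint representation furnishes a complete flag of ideals $0=\mathfrak{g}_0\subset\mathfrak{g}_1\subset\cdots\subset\mathfrak{g}_n=\mathfrak{g}$ with $\dim\mathfrak{g}_k=k$ and each $\mathfrak{g}_{k-1}$ an ideal of $\mathfrak{g}_k$. Choosing a line $\mathfrak{f}_k\subset\mathfrak{g}_k$ complementary to $\mathfrak{g}_{k-1}$ and spanned by some $x_k$ gives the iterated semidirect sum~\eqref{fgdec}, in which $\mathfrak{f}_k$ acts on $\mathfrak{g}_{k-1}$ through $\operatorname{ad}x_k$. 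On enveloping algebras this is the standard passage from a semidirect sum to a smash product (an Ore extension in the one-dimensional case; cf. \cite[1.7.11(iv)]{MR87}): $U(\mathfrak{g}_k)\cong U(\mathfrak{g}_{k-1})\mathop{\#}U(\mathfrak{f}_k)$ with $U(\mathfrak{f}_k)\cong\mathbb{C}[z]$ acting by the derivation induced by $\operatorname{ad}x_k$, and iterating yields the algebraic decomposition of $U(\mathfrak{g})$ displayed in the theorem.

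Next I would pass to the Arens--Michael envelope factor by factor. The essential input, taken from \cite{Ar_smash} together with the completions identified in \cite{ArRC,AHHFG}, is that the envelope of such an iterated smash product is again an iterated \emph{analytic} smash product, obtained by replacing each polynomial factor $\mathbb{C}[z]=U(\mathfrak{f}_k)$ by a completion of the form $\mathfrak{A}_{i_k}$ from~\eqref{faAsdef} and each action by its continuous extension. The index $i_k$ is dictated by the weight $\lambda_k$ with which $x_k$ occurs under the higher actions. In the model case $[x_j,x_k]=\lambda_k x_k$ with $\lambda_k\ne0$ one has $\operatorname{ad}(x_j)(x_k^n)=n\lambda_k x_k^n$, whence $|n\lambda_k|\,p(x_k^n)\le 2\,p(x_j)\,p(x_k^n)$ for every continuous submultiplicative seminorm $p$; thus $p(x_k^n)=0$ once $n>2p(x_j)/|\lambda_k|$, the element $x_k$ is topologically nilpotent, and the corresponding factor is $\mathfrak{A}_\infty=\mathbb{C}[[z]]$. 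Purely nilpotent directions produce finite indices governed by the relevant Jordan-chain lengths, and free directions produce $\mathfrak{A}_0=\mathcal{O}(\mathbb{C})$. Assembling the factors gives~\eqref{expfsmp}.

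Compatibility of $\iota\colon U(\mathfrak{g})\to\widehat U(\mathfrak{g})$ with the two decompositions is the easy half. Each envelope map $\mathbb{C}[z]=U(\mathfrak{f}_k)\to\mathfrak{A}_{i_k}$ is a $\mathop{\widehat\otimes}$-algebra homomorphism intertwining the algebraic and the extended actions, so by functoriality of smash products (Lemma~\ref{homsmpr}, applied iteratively with the Hopf $\mathop{\widehat\otimes}$-algebra homomorphisms $\be\colon\mathbb{C}[z]\to\mathfrak{A}_{i_k}$ of Lemma~\ref{CzAsHh}) these maps assemble into a $\mathop{\widehat\otimes}$-algebra homomorphism from the algebraic decomposition of $U(\mathfrak{g})$ into~\eqref{expfsmp}. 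Since the target is an Arens--Michael algebra and the composite restricts on $U(\mathfrak{g})$ to the canonical completion map, the universal property of the envelope identifies this homomorphism with $\iota$, giving the asserted compatibility.

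The genuine obstacle is the monotonicity. Among the many complete flags of ideals available I must select one whose induced indices $i_1,\ldots,i_n$ are non-increasing. I expect this to follow by ordering the flag along the weight data --- filtering first through the part of the nilradical carrying the nonzero-weight (hence $\mathfrak{A}_\infty$) directions, then through the remaining nilpotent directions in order of decreasing Jordan-chain length, and finally through the complementary toral directions, which contribute $\mathfrak{A}_0$ because, once the nilradical has been filtered first, each toral generator has weight zero relative to the preceding subalgebra. Checking that such a flag is compatible with the ideal structure at every step, and that the indices which \cite{Ar_smash} attaches to it are indeed monotone for this choice, is the point that will need the most care.
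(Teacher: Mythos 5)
Your outline and the paper's proof both ultimately rest on \cite{Ar_smash}, but the paper invokes it in a way that avoids exactly the points you leave open. The actual proof is a reduction: for $G$ the simply connected complex Lie group with Lie algebra $\mathfrak{g}$, the embedding $U(\mathfrak{g})\to{\mathscr A}(G)$ induces a topological isomorphism $\widehat U(\mathfrak{g})\cong\widehat{\mathscr A}(G)$ by \cite[Proposition~2.1]{ArAMN}, and then Theorems~4.4, 6.5 and~6.4 of \cite{Ar_smash} are quoted wholesale; those theorems already contain the existence of a suitable flag, the identification of the factors as $\mathfrak{A}_{i_k}$, the non-increasing ordering of the indices, and the compatibility with the algebraic decomposition. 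Note that your own citations of \cite{Ar_smash} would in any case have to pass through this reduction, since the decomposition results there are stated for ${\mathscr A}(G)$ and its completions rather than for $U(\mathfrak{g})$ directly.

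The genuine gap is the one you flag yourself: the monotonicity of $i_1,\ldots,i_n$. An arbitrary complete flag supplied by Lie's theorem will not do; you must produce a flag that is simultaneously a chain of ideals at every step and ordered so that the $\mathfrak{A}_\infty$-directions come first, then the finite indices in decreasing order, then the $\mathfrak{A}_0$-directions. Your proposed ordering (the nonzero-weight part of the nilradical, then the rest of the nilpotent directions, then a complement) has the right shape, but verifying that such a refinement exists as a chain of ideals and that it is assigned exactly these indices is the substance of the structure theory in \cite{Ar_smash} (involving the exponential radical and a grading of the nilpotent part), not a routine check, and you do not carry it out. A second, smaller gap: your seminorm estimate in the model case only shows that every continuous submultiplicative seminorm kills $x_k^n$ for large $n$, i.e.\ that the closed subalgebra generated by $x_k$ is a quotient of $\mathbb{C}[[z]]$; identifying the completion exactly, and showing that the whole algebra splits as the projective tensor product underlying the iterated analytic smash product, requires constructing sufficiently many submultiplicative seminorms --- the hard direction, again supplied only by \cite{Ar_smash}. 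The compatibility argument via Lemma~\ref{homsmpr} and the universal property of the envelope is fine and matches what the paper asserts.
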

Here the compatibility of decompositions means that at each step we have a smash product with a homomorphism of the form $\be$ considered in Lemma~\ref{CzAsHh} as described in Lemma~\ref{homsmpr}. (Note that $U(\mathfrak{f}_i)\cong \mathbb{C}[z]$ for every $i$.)
\begin{proof}
We use reduction to algebras of analytic functionals.
Let $G$ be a simply connected complex Lie group whose Lie algebra is isomorphic to $\mathfrak{g}$. Also, let ${\mathscr A}(G)$ denote the corresponding algebra of analytic functionals (the strong dual space of the space holomorphic functions on $G$ endowed with the  convolution multiplication) and $\widehat{\mathscr A}(G)$ the Arens--Michael envelope of ${\mathscr A}(G)$. Then the natural embedding $U(\mathfrak{g})\to{\mathscr A}(G)$ induces a topological isomorphism
$\widehat U(\mathfrak{g})\to\widehat{\mathscr A}(G)$ \cite[Proposition 2.1]{ArAMN}. So we can apply results on $\widehat{\mathscr A}(G)$, Theorems~4.4,~6.5 and~6.4  in~\cite{Ar_smash}, which give the existence of the decomposition in~\eqref{fgdec} and~\eqref{expfsmp}, and the compatibility with the decomposition of $U(\mathfrak{g})$, respectively.
\end{proof}

\subsection*{Smash product and pushouts}

We use the results in this section in the proof of Theorem~\ref{UDEinh}.

\begin{pr}\label{UDElpu}
Let $H$ be a Hopf $\mathbin{\widehat{\otimes}}$-algebra and $\al\!: R\to S$ an $H$-$\mathbin{\widehat{\otimes}}$-module algebra homomorphism. If $\al$ has dense range, then
\begin{equation*}
   \xymatrix{
  R  \ar[d]_\al\ar[r]_{i_R}&R \mathop{\widehat{\#}} H \ar[d]^{\al\mathop{\widehat{\#}} 1} \\
S \ar[r]_{i_S}  &S \mathop{\widehat{\#}} H,
}
\end{equation*}
where $i_R$ and $i_S$ are the canonical homomorphisms,
is a pushout diagram in the category of $\mathbin{\widehat{\otimes}}$-algebras.
\end{pr}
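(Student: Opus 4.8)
The plan is to verify the universal property of the pushout directly, reducing it to the universal property~(B) of the analytic smash product $S\mathop{\widehat{\#}}H$ in Definition~\ref{PsmaDef}. Denote by $j_R\!:H\to R\mathop{\widehat{\#}}H$ and $j_S\!:H\to S\mathop{\widehat{\#}}H$ the canonical homomorphisms of the $H$-factors. Commutativity of the square is immediate: since $\al$ is unital, $(\al\mathop{\widehat{\#}}1)(r\otimes 1)=\al(r)\otimes 1$, that is, $(\al\mathop{\widehat{\#}}1)\circ i_R=i_S\circ\al$. For the universal property I would take an arbitrary $\mathop{\widehat\otimes}$-algebra $B$ together with homomorphisms $q_1\!:R\mathop{\widehat{\#}}H\to B$ and $q_2\!:S\to B$ satisfying $q_1\circ i_R=q_2\circ\al$, and aim to produce a unique $\phi\!:S\mathop{\widehat{\#}}H\to B$ with $\phi\circ(\al\mathop{\widehat{\#}}1)=q_1$ and $\phi\circ i_S=q_2$.

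To invoke~(B) for $S\mathop{\widehat{\#}}H$ I would set $\psi:=q_1\circ j_R\!:H\to B$ and $\varphi:=q_2\!:S\to B$. Since $\varphi$ is already a $\mathop{\widehat\otimes}$-algebra homomorphism, the only substantial point---and the main obstacle---is to show that $\varphi$ is an $H$-module morphism for the adjoint action~\eqref{adac} associated with~$\psi$, i.e.\ that $\varphi(h\cdot s)=h\cdot\varphi(s)$. On elements of the form $s=\al(r)$ this follows from a direct calculation that uses, in turn, that $\al$ is an $H$-module morphism, the identity $q_1\circ i_R=q_2\circ\al$, that $i_R$ is an $H$-module algebra homomorphism for the adjoint action associated with~$j_R$ (Condition~(A) of Definition~\ref{PsmaDef}), and multiplicativity of~$q_1$:
\begin{align*}
q_2(h\cdot \al(r))=q_2(\al(h\cdot r))=q_1(i_R(h\cdot r))
&=\sum q_1\bigl(j_R(h_{(1)})\bigr)\,q_1\bigl(i_R(r)\bigr)\,q_1\bigl(j_R(S(h_{(2)}))\bigr)\\
&=\sum \psi(h_{(1)})\,q_2(\al(r))\,\psi(S(h_{(2)})).
\end{align*}

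Here is where the density hypothesis enters decisively. For fixed $h$, both sides of $\varphi(h\cdot s)=h\cdot\varphi(s)$ are continuous in~$s$: the left-hand side because the $H$-action on~$S$ and the map~$q_2$ are continuous, and the right-hand side because $q_2$ is continuous and the adjoint action of~$h$ on~$B$ is continuous (as $B$ is an $H$-$\mathop{\widehat\otimes}$-module algebra). Since the two continuous maps coincide on the dense subspace $\al(R)\subseteq S$, they coincide on all of~$S$, so $\varphi$ is an $H$-module morphism, and~(B) supplies a unique $\mathop{\widehat\otimes}$-algebra homomorphism $\phi$ with $\phi\circ i_S=\varphi=q_2$ and $\phi\circ j_S=\psi=q_1\circ j_R$. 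It then remains to confirm $\phi\circ(\al\mathop{\widehat{\#}}1)=q_1$, which I would check on the dense set of elementary tensors $r\otimes h=i_R(r)\,j_R(h)$: applying $\al\mathop{\widehat{\#}}1$ gives $i_S(\al(r))\,j_S(h)$, and $\phi$ sends this to $q_2(\al(r))\,q_1(j_R(h))=q_1(i_R(r))\,q_1(j_R(h))=q_1(r\otimes h)$. Uniqueness is similar: any competitor $\phi'$ must agree with $\phi$ on $i_S(S)$ (both equal $q_2$) and on $j_S(H)=(\al\mathop{\widehat{\#}}1)(j_R(H))$ (both equal $q_1\circ j_R$), hence on the dense span of the products $i_S(s)\,j_S(h)$, so $\phi'=\phi$. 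This establishes the universal property.
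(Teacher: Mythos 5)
Your proof is correct and follows essentially the same route as the paper: both reduce the pushout property to the universal property~(B) of $S\mathop{\widehat{\#}}H$, with the key step being the verification that $q_2$ is an $H$-module morphism for the adjoint action, done first on $\al(R)$ via the same Sweedler computation and then extended by density and continuity. The only (cosmetic) difference is at the end: where you verify $\phi\circ(\al\mathop{\widehat{\#}}1)=q_1$ and uniqueness by direct computation on the dense span of elementary tensors, the paper instead invokes the uniqueness clause of the universal property of $R\mathop{\widehat{\#}}H$ applied to $\chi$.
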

\begin{proof}
Let $\varphi\!: S\to B$ and $\chi\!:R \mathop{\widehat{\#}} H \to B$ be $\mathop{\widehat\otimes}$-algebra homomorphisms such that $\varphi\al=\chi i_R$. We want to use the universal property in Definition~\ref{PsmaDef}. Put $\psi\!:=\chi j$, where $j\!:H\to R \mathop{\widehat{\#}} H$ is the canonical homomorphism. It suffices to find a $\mathop{\widehat\otimes}$-algebra homomorphism $\te$ making the diagram
\begin{equation*}
\xymatrix@C=20pt{
   R \ar[rr]^{i_R}\ar[d]_{\al} && R \mathop{\widehat{\#}} H \ar[d]_{\al\mathop{\widehat{\#}} 1}\ar[ddr]^{\chi}& H\ar[l]^j\ar[dd]^\psi \\
 S\ar[rr]^{i_S}\ar[drrr]_{\varphi}&& S \mathop{\widehat{\#}} H\ar@{-->}[dr]_{\te}\\
     &&&B
 }
 \end{equation*}
commutative and prove that it is unique.

If $h\in H$ and  $r\in R$, then by the definition of the adjoint action in~\eqref{adac}, we have that
$$
h\cdot \varphi(\al(r))=\sum \psi(h_{(1)})\,\varphi(\al(r))\,\psi(S(h_{(2)}))=\chi \left(\sum j(h_{(1)})\,i_R(r)\,j(S(h_{(2)}))\right)=\chi(i_R(h\cdot r)).
$$
(The last equality follows from the fact that $i_R$ is an $H$-$\mathbin{\widehat{\otimes}}$-module morphism with respect to the adjoint action.) Thus $h\cdot \varphi(\al(r))=\varphi(\al(h\cdot r))$. Hence $\varphi\al$ is an $H$-$\mathbin{\widehat{\otimes}}$-module morphism.

Further, by the hypothesis, $\al$ is also an $H$-$\mathbin{\widehat{\otimes}}$-module morphism and so is $\varphi$ since the range of $\al$ is dense.
Hence, by the universal property of smash products written for $ S \mathop{\widehat{\#}} H$, there is a unique  $\mathop{\widehat\otimes}$-algebra homomorphism $\te$ such that $\te i_S=\varphi$ and $\te(\al\mathop{\widehat{\#}} 1)j=\psi$.

Since $\varphi\al$ is an $H$-$\mathbin{\widehat{\otimes}}$-module algebra homomorphism, it follows from the universal property of smash products written for $R\mathop{\widehat{\#}} H$ that a homomorphism $\chi$ such that $\chi i_R=\varphi\al$ and $\chi j=\psi$ is unique. Therefore $\te(\al\mathop{\widehat{\#}} 1)=\chi$.

To complete the proof we need to show that $\te$ such that $\te i_S=\varphi$ and $\te(\al\mathop{\widehat{\#}} 1)=\chi$ is unique. If $\te'$ is another $\mathop{\widehat\otimes}$-algebra homomorphism with these properties, then $\te'(\al\mathop{\widehat{\#}} 1)i_R=\varphi\al$ and $\te'(\al\mathop{\widehat{\#}} 1)j=\psi$. The uniqueness statement above implies that $\te'=\te$.
\end{proof}

\begin{pr}\label{UDErpu}
Let $\be\!:H\to K$ be a Hopf $\mathbin{\widehat{\otimes}}$-algebra homomorphism  and $S$ a $K$-$\mathbin{\widehat{\otimes}}$-module algebra.  If $\be$ has dense range, then
\begin{equation*}
   \xymatrix{
S \mathop{\widehat{\#}} H  \ar[d]_{1\mathop{\widehat{\#}}\be}& H \ar[d]^\be\ar[l]^{j_H} \\
S \mathop{\widehat{\#}} K  & K\ar[l]^{j_K},
}
\end{equation*}
where $j_H$ and $j_K$ are the canonical homomorphisms,
is a pushout diagram in the category of $\mathbin{\widehat{\otimes}}$-algebras.
\end{pr}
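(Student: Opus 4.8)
The plan is to verify the universal property of the pushout directly, mirroring the argument for Proposition~\ref{UDElpu}. First note that $S$ becomes an $H$-$\mathbin{\widehat{\otimes}}$-module algebra by pulling back its $K$-action along $\be$, i.e.\ $h\cdot s:=\be(h)\cdot s$; with this structure the identity map $S\to S$ is an $H$-module morphism into the $K$-module algebra $S$, so $1\mathop{\widehat{\#}}\be$ is a well-defined $\mathbin{\widehat{\otimes}}$-algebra homomorphism by Lemma~\ref{homsmpr}. Write $i_H\!:S\to S\mathop{\widehat{\#}} H$ and $i_K\!:S\to S\mathop{\widehat{\#}} K$ for the canonical $S$-side homomorphisms. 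Suppose we are given $\mathbin{\widehat{\otimes}}$-algebra homomorphisms $\chi\!:S\mathop{\widehat{\#}} H\to B$ and $\varphi\!:K\to B$ with $\chi j_H=\varphi\be$. To produce $\te\!:S\mathop{\widehat{\#}} K\to B$ I will feed the data $\psi:=\varphi$ and $\widehat\varphi:=\chi i_H\!:S\to B$ into the universal property of $S\mathop{\widehat{\#}} K$ (Definition~\ref{PsmaDef}).

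The crux is to check that $\widehat\varphi$ is a $K$-$\mathbin{\widehat{\otimes}}$-module algebra homomorphism with respect to the adjoint action associated with $\varphi$. Since $i_H$ satisfies Condition~(A) of Definition~\ref{PsmaDef}, it intertwines the $H$-action on $S$ with the adjoint action of $j_H$; applying the algebra homomorphism $\chi$ and using $\chi j_H=\varphi\be$ together with the fact that $\be$ commutes with comultiplication and antipode, one computes
$$
\widehat\varphi(h\cdot s)=\sum\varphi(\be(h)_{(1)})\,\widehat\varphi(s)\,\varphi(S(\be(h)_{(2)}))=\be(h)\cdot\widehat\varphi(s)\qquad(h\in H,\ s\in S),
$$
where the last expression is the adjoint action of $\be(h)\in K$. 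Because $h\cdot s=\be(h)\cdot s$, this says exactly that $\widehat\varphi(k\cdot s)=k\cdot\widehat\varphi(s)$ for every $k$ in the range of $\be$. Here the hypothesis that $\be$ has dense range is decisive: for fixed $s$ both sides are continuous functions of $k\in K$ (the module action and multiplication are jointly continuous and the comultiplication is continuous), so the identity persists on the closure and hence for all $k\in K$. Thus $\widehat\varphi$ is $K$-equivariant, and being a composite of $\mathbin{\widehat{\otimes}}$-algebra homomorphisms it is an algebra homomorphism, so it is a $K$-$\mathbin{\widehat{\otimes}}$-module algebra homomorphism, as required. I expect this density argument to be the only genuinely non-formal step.

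Granting this, the universal property of $S\mathop{\widehat{\#}} K$ yields a unique $\mathbin{\widehat{\otimes}}$-algebra homomorphism $\te$ with $\te i_K=\widehat\varphi$ and $\te j_K=\varphi$; the latter is one of the two pushout equations. For the other, $\te(1\mathop{\widehat{\#}}\be)$ and $\chi$ are two homomorphisms $S\mathop{\widehat{\#}} H\to B$, and I will show they agree by the uniqueness half of the universal property of $S\mathop{\widehat{\#}} H$. Using $(1\mathop{\widehat{\#}}\be)i_H=i_K$ and $(1\mathop{\widehat{\#}}\be)j_H=j_K\be$ (both immediate from the formula for $1\mathop{\widehat{\#}}\be$ and $\be(1)=1$), one gets $\te(1\mathop{\widehat{\#}}\be)i_H=\te i_K=\widehat\varphi=\chi i_H$ and $\te(1\mathop{\widehat{\#}}\be)j_H=\te j_K\be=\varphi\be=\chi j_H$, so the two maps coincide on $i_H$ and $j_H$ and are therefore equal. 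Finally, if $\te'$ also satisfies $\te'j_K=\varphi$ and $\te'(1\mathop{\widehat{\#}}\be)=\chi$, then $\te'i_K=\te'(1\mathop{\widehat{\#}}\be)i_H=\chi i_H=\widehat\varphi$ as well, so $\te'$ and $\te$ agree on $i_K$ and $j_K$; uniqueness in the universal property of $S\mathop{\widehat{\#}} K$ gives $\te'=\te$, completing the verification.
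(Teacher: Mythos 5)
Your proposal is correct and follows essentially the same route as the paper's proof: define the restriction $S\to B$ as $\chi i_H$, verify its $K$-equivariance on the dense range of $\be$ and extend by continuity, invoke the universal property of $S\mathop{\widehat{\#}} K$ to get $\te$, and use the uniqueness clause for $S\mathop{\widehat{\#}} H$ to identify $\te(1\mathop{\widehat{\#}}\be)$ with $\chi$. Your explicit continuity justification of the density step only spells out what the paper leaves implicit.
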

\begin{proof}
Let $\psi\!: K\to B$ and $\chi\!:S \mathop{\widehat{\#}} H \to B$ be $\mathop{\widehat\otimes}$-algebra homomorphisms  such that $\psi\be=\chi j_H$. We use the universal property in Definition~\ref{PsmaDef} as in the proof of Proposition~\ref{UDElpu}. Put $\varphi\!:=\chi i$, where $i\!:S\to S \mathop{\widehat{\#}} H$ is the canonical homomorphism. It suffices to find a $\mathop{\widehat\otimes}$-algebra homomorphism $\te$ making the diagram
\begin{equation*}
   \xymatrix{
S\ar[dd]_\varphi\ar[r]^{i} &S \mathop{\widehat{\#}} H  \ar[ddl]_\chi\ar[d]^{1\mathop{\widehat{\#}}\be}&& H \ar[d]^\be\ar[ll]_{j_H} \\
&S \mathop{\widehat{\#}} K\ar@{-->}[dl]^{\te}  && K\ar[ll]^{j_K}\ar[dlll]^\psi,\\
B&&&
}
\end{equation*}
commutative  and prove that it is unique.

Recall that $S$ is endowed with the action $h\cdot s=\be(h)\cdot s$. If $h\in H$ and  $s\in S$, then by the definition of the adjoint action, we have
\begin{multline*}
\be(h)\cdot \varphi(s)=\sum \psi(\be(h_{(1)}))\,\varphi(s)\,\psi(\be(S(h_{(2)})))=\\
\chi \left(\sum j_H(h_{(1)})\,i(s)\,j_H(S(h_{(2)}))\right)=\chi( i(\be(h)\cdot s)).
\end{multline*}
(The last equality follows from the fact that $i$ is an $H$-$\mathbin{\widehat{\otimes}}$-module morphism with respect to the adjoint action associated with $j_H$.) Hence $k\cdot \varphi(s)=\varphi(k\cdot s)$ when $k=\be(h)$. Therefore $\varphi$ is a $K$-$\mathbin{\widehat{\otimes}}$-module morphism since the range of $\be$ is dense.

Hence, by the universal property of smash products written for $S\mathop{\widehat{\#}} K$, there is a unique $\mathop{\widehat\otimes}$-algebra homomorphism $\te$ such that $\te j_K=\psi$ and $\te(1\mathop{\widehat{\#}}\be)i=\varphi$. The first equality implies that $\te(1\mathop{\widehat{\#}}\be)j_H=\psi\be$.

Being a $K$-$\mathbin{\widehat{\otimes}}$-module morphism, $\varphi$ is also an $H$-$\mathbin{\widehat{\otimes}}$-module morphism. Then the universal property of smash products written for $S\mathop{\widehat{\#}} H$ implies that a homomorphism $\chi$ such that $\chi i=\varphi$ and $\chi j_H=\psi\be$ is unique. Therefore $\te(1\mathop{\widehat{\#}}\be)=\chi$.

To complete the proof we need to show that $\te$ such that $\te j_K=\psi$ and $\te(1\mathop{\widehat{\#}}\be)=\chi$ is unique. If $\te'$ is another $\mathop{\widehat\otimes}$-algebra homomorphism with these properties, then  $\te'(1\mathop{\widehat{\#}}\be)i=\varphi$ and $\te'(1\mathop{\widehat{\#}}\be)j_H=\psi\be$. The uniqueness statement above implies that $\te'=\te$.
\end{proof}

\section{Unique extension property for derivations}
\label{s:UDE}

We denote the vector space of continuous derivations from a $\mathop{\widehat\otimes}$-algebra $A$ to an $A$-$\mathop{\widehat\otimes}$-bimodule $X$ by $\Der(A,X)$. It is clear that a $\mathop{\widehat\otimes}$-algebra homomorphism $\varphi\!:A\to B$ induces the linear map
$$
\widetilde\varphi_X\!:\Der(B,X)\to \Der(A,X)\!:D\mapsto D\varphi.
$$

\begin{df}\label{UDEdef}
We say that a homomorphism $\varphi\!:A\to B$ of $\mathop{\widehat\otimes}$-algebras satisfies   \emph{Property (UDE)} (the unique  extension property for derivations) if $\widetilde\varphi_X\colon\Der (B, X)\to \Der (A, X)$ is bijective for each $B$-$\mathop{\widehat\otimes}$-bimodule $X$.
\end{df}

It is proved in \cite{Pir_qfree} that every Arens--Michael envelope satisfies Property~(UDE).

In what follows we denote the pushout of $\mathop{\widehat\otimes}$-algebra homomorphisms $A\to B_1$ and $A\to B_2$ using the relative free algebra notation, i.e., as $B_1\mathbin{\ast_A} B_2$.

\begin{pr}\label{pushoex}
The category of $\mathop{\widehat\otimes}$-algebras is cocomplete.  
\end{pr}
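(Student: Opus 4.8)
The plan is to reduce cocompleteness to the existence of two kinds of colimits. Recall the standard categorical fact that a category having all small coproducts and all coequalizers is cocomplete, since the colimit of any small diagram $D\colon J\to\mathcal C$ may be presented as the coequalizer of the two natural morphisms $\coprod_{(u\colon j\to j')}D(j)\rightrightarrows\coprod_{j}D(j)$ (one built from the identities, the other from the $D(u)$); see, e.g., Mac~Lane, \emph{Categories for the Working Mathematician}, Chapter~V. The empty coproduct, i.e. the initial object, is the ground field $\mathbb C$ regarded as a $\mathop{\widehat\otimes}$-algebra, which is initial precisely because homomorphisms are required to preserve the identity. Hence it suffices to construct coequalizers and arbitrary small coproducts.

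Coequalizers are the easy half. Given $f,g\colon A\to B$, let $I$ be the smallest closed two-sided ideal of $B$ containing $\{f(a)-g(a):a\in A\}$, and let $Q$ be the Hausdorff completion of the quotient algebra $B/I$; completion is included because a quotient of a complete space by a closed subspace need not be complete. The multiplication of $B$ descends to $B/I$ and extends to $Q$, so $Q$ is a $\mathop{\widehat\otimes}$-algebra, and the composite $q\colon B\to B/I\to Q$ satisfies $qf=qg$. Any homomorphism $\varphi\colon B\to C$ with $\varphi f=\varphi g$ annihilates $I$ by continuity, hence factors through $B/I$ and, $C$ being complete, uniquely through $Q$; thus $q$ is the coequalizer.

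The substantial part is coproducts. For a family $(A_\lambda)_{\lambda\in\Lambda}$ I would form the algebraic free product over $\mathbb C$, whose underlying space is the locally convex direct sum $\bigoplus_{n\ge0}\bigoplus\bar A_{\lambda_1}\mathbin{\widehat{\otimes}}\cdots\mathbin{\widehat{\otimes}}\bar A_{\lambda_n}$ taken over reduced words ($\bar A_\lambda:=A_\lambda/\mathbb C1$, $\lambda_k\ne\lambda_{k+1}$), and then pass to its completion $A$. One must verify that the free-product multiplication is jointly continuous for this topology, so that $A$ is indeed a $\mathop{\widehat\otimes}$-algebra, and that an arbitrary family of continuous unital homomorphisms $A_\lambda\to B$ assembles into a continuous homomorphism on the free product, which then extends uniquely to $A$. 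Equivalently, and more structurally, one shows that the forgetful functor into the category $\mathcal V$ of complete locally convex spaces has a left adjoint, the completed tensor algebra sending $E$ to the completion of $\bigoplus_{n\ge0}E^{\widehat{\otimes} n}$, and invokes the general theorem that the category of monoids in a cocomplete symmetric monoidal category whose tensor product preserves colimits separately in each variable is itself cocomplete.

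The main obstacle in either route is the analytic input that $(-)\mathbin{\widehat{\otimes}}(-)$ preserves small colimits separately in each variable. This rests on two facts: the uncompleted projective tensor product commutes with locally convex direct sums and with quotient maps, and the completion functor, being left adjoint to the inclusion of complete spaces into all locally convex spaces, preserves all colimits; composing them yields the desired cocontinuity of $\mathbin{\widehat{\otimes}}$. Granting it, both the joint continuity of the free-product multiplication and the universal property of the completed free product follow, and together with the coequalizers constructed above they establish cocompleteness. I expect the most delicate point to be the verification that the free-product multiplication is jointly continuous and that the universal property holds for arbitrary (in particular infinite) families---which is exactly where the cocontinuity of $\mathbin{\widehat{\otimes}}$ is used.
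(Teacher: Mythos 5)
Your reduction to coproducts plus coequalizers is the same as the paper's, and your coequalizer construction (the completion of $B/I$, with $I$ the closed two-sided ideal generated by the differences) is literally the paper's. The gap is in the coproducts, and it sits exactly where you yourself locate the delicate point: both of your routes rest on the assertion that the projective tensor product commutes with infinite locally convex direct sums, equivalently that a bilinear map $\bigl(\bigoplus_i E_i\bigr)\times F\to G$ is jointly continuous as soon as each restriction $E_i\times F\to G$ is. This is not a fact; it fails already for countable index sets once a factor is non-normable. Take $E=\mathcal{O}(\mathbb{C})$, $F=\mathbb{C}^{(\mathbb{N})}$ and $b\bigl(x,(t_j)\bigr)=\sum_j t_j\,x(z_j)$ with $|z_j|\to\infty$: each restriction $E\times\mathbb{C}\to\mathbb{C}$ is jointly continuous, but $b$ is not, since no neighbourhood $\{\sup_{|z|\le R}|x|<\varepsilon\}$ of $0$ in $\mathcal{O}(\mathbb{C})$ has all the evaluations $\delta_{z_j}$ bounded on it. Hence $\mathcal{O}(\mathbb{C})\mathbin{\widehat{\otimes}}\mathbb{C}^{(\mathbb{N})}\not\cong\mathcal{O}(\mathbb{C})^{(\mathbb{N})}$ topologically; the word-by-word multiplication on your reduced-word direct sum need not be jointly continuous; and the ``monoids in a cocomplete monoidal category with cocontinuous tensor'' theorem is unavailable, because $(-)\mathbin{\widehat{\otimes}}F$ is not a left adjoint (the category of complete locally convex spaces is not closed for the projective tensor product). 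The Remark following this proposition in the paper records that an argument of precisely this kind (Proposition~B.1 of \cite{AHHFG}) was found to be incorrect, so this is a known trap rather than a routine verification.

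The paper avoids the issue by never topologizing the free product as a direct sum of word spaces. It invokes Valqui's theorem \cite{Va01} for the existence of the tensor $\mathop{\widehat\otimes}$-algebra $T_0(E)$ on a locally convex space $E$ --- whose topology is a bespoke one, finer than the direct-sum topology on $\bigoplus_n E^{\mathop{\widehat\otimes} n}$, chosen so that the multiplication is jointly continuous and the universal property holds --- takes $T(E)$ to be its unitization, and then realizes the coproduct of $(A_i)$ as the completion of the quotient of $T\bigl(\bigoplus_i A_i\bigr)$ by the closed two-sided ideal generated by the elements $a_i\otimes a_i'-a_ia_i'$ and $1-1_{A_i}$. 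Here the direct sum $\bigoplus_i A_i$ enters only as a locally convex space, so no bilinear continuity on a direct sum is ever needed. To repair your write-up, replace the reduced-word model by this generators-and-relations model (or supply an independent construction of $T(E)$); the rest of your argument then goes through.
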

\begin{proof}
It suffices to show that coproducts and coequalizers of pairs exist.

To prove the existence of coproducts we first note that for every locally convex space $E$ there is a unital tensor $\mathop{\widehat\otimes}$-algebra $T(E)$ This means that the following universal property is satisfied: there is a continuous linear map $\mu\!:E\to T(E)$ such that for every unital $\mathop{\widehat\otimes}$-algebra~$B$ and every continuous linear map $\psi\!: E \to B$ there exists a unique unital continuous homomorphism $\widehat\psi\!:T(E)\to B$ making the diagram
\begin{equation*}
  \xymatrix{
E \ar[r]^{\mu}\ar[rd]_{\psi}&T(E)\ar@{-->}[d]^{\widehat\psi}\\
 &B\\
 }
\end{equation*}
commutative. In the category of non-unital $\mathop{\widehat\otimes}$-algebras, a tensor algebra $T_0(E)$ is proved to exist  in \cite[Theorem 1]{Va01} and we can take the unitization of $T_0(E)$ as $T(E)$.

Now let $(A_i)$ be a family of unital $\mathop{\widehat\otimes}$-algebras.
Then the coproduct  can be represented as the completion of the quotient of $T(\oplus_i A_i)$ by the closed two-sided ideal generated by all elements of the form $a_i\otimes a'_i - a_ia'_i$ and  $1 - 1_{A_i}$, where  $a_i,a'_i\in A_i$ , $1_{A_i}$ is the identity in $A_i$ and $i$ runs through all possible values; cf. \cite[Proposition 4.2]{Pi15}.

To prove the existence of coequalizers suppose that $\varphi,\psi\!:A\to B$ are parallel $\mathop{\widehat\otimes}$-algebra homomorphisms. Denote by $C$ the completion of the quotient $B/I$, where $I$ is the closed two-sided ideal generated by $\{\varphi(a)-\psi(a);\,a\in A\}$. (We need the completion since the quotient space may be non-complete.) 

We claim that $(C,\tau)$, where $\tau\!:B\to C$ is the naturally defined homomorphism, is a coequalizer.
Indeed, let $\chi\! : B\to D$ be a $\mathop{\widehat\otimes}$-algebra homomorphism such that $\chi\varphi= \chi\psi $. Then $\chi(\varphi(a)-\psi(a))=0$ for every $a$ and so $\chi$ maps $I$ to $0$. Hence $\chi$ induces a continuous homomorphism $\chi'\!:B/I\to D$ such that $\chi=\chi'\tau$. Since $\chi'$ extends continuously to $C$, the claim is proved. 
\end{proof}

\begin{rem}
A preliminary version of this article referenced Proposition~B.1 in the author's paper~\cite{AHHFG}, whose proof is generally incorrect.
The author is grateful to the referee for pointing out a mistake in \cite{AHHFG} and observing that a correct proof can be found in \cite{Va01}.
\end{rem}

In particular, we have that pushouts  of $\mathop{\widehat\otimes}$-algebras exist. The following result is an analytic version of Proposition 5.2 in~\cite{BerDic}.

\begin{pr}\label{DDPsta}
In the  category of unital  $\mathop{\widehat\otimes}$-algebras, Property (UDE) is preserved by pushouts, i.e., if in a pushout diagram 
\begin{equation}\label{puUDE}
\xymatrix@C=20pt{
 A \ar[rr]^{\varphi_2}\ar[d]_{\varphi_1} && B_2 \ar[d]^{\varkappa_2}  \\
 B_1\ar[rr]_{\varkappa_1}&& B_1\ast_A B_2
 }
 \end{equation}
the homomorphism $\varphi_1$ satisfies Property (UDE), then  $\varkappa_2$ so does.
\end{pr}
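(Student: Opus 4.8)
The plan is to translate Property (UDE) into a lifting property for algebra homomorphisms and then read off the conclusion from the universal property of the pushout. First I would record the standard dictionary in the $\mathop{\widehat\otimes}$-setting. For a $\mathop{\widehat\otimes}$-algebra $C$ and a $C$-$\mathop{\widehat\otimes}$-bimodule $X$, the \emph{trivial (square-zero) extension} $C\ltimes X$ is the $\mathop{\widehat\otimes}$-algebra with underlying space $C\oplus X$ and multiplication $(c,x)(c',x')=(cc',\,c\cdot x'+x\cdot c')$; since $X$ is complete and this product is jointly continuous, $C\ltimes X$ is indeed a $\mathop{\widehat\otimes}$-algebra, and the projection $\pi\colon C\ltimes X\to C$ is a homomorphism. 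For any homomorphism $f\colon D\to C$, with $X$ viewed as a $D$-$\mathop{\widehat\otimes}$-bimodule through $f$, the assignment $\delta\mapsto(d\mapsto(f(d),\delta(d)))$ is a bijection from $\Der(D,X)$ onto the set of homomorphisms $g\colon D\to C\ltimes X$ with $\pi g=f$, the Leibniz identity being precisely multiplicativity of $g$. Continuity is preserved in both directions.

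With $P\!:=B_1\ast_A B_2$ and $\iota\!:=\varkappa_1\varphi_1=\varkappa_2\varphi_2$, I fix a $P$-$\mathop{\widehat\otimes}$-bimodule $X$; all bimodule structures below are restrictions of this one. Under the dictionary, $(\widetilde{\varkappa_2})_X$ becomes the map $\sigma\mapsto\sigma\varkappa_2$ from sections of $\pi\colon P\ltimes X\to P$ (homomorphisms $\sigma$ with $\pi\sigma=\mathrm{id}_P$) to lifts of $\varkappa_2$ (homomorphisms $\tau$ with $\pi\tau=\varkappa_2$). Thus bijectivity of $(\widetilde{\varkappa_2})_X$ is equivalent to the statement that every lift of $\varkappa_2$ extends uniquely to a section of $\pi$, and this is what I would prove.

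For surjectivity, start from a lift $\tau\colon B_2\to P\ltimes X$ of $\varkappa_2$, corresponding to $D_0\in\Der(B_2,X)$. Its restriction $D_0\varphi_2$ lies in $\Der(A,X)$ for the $A$-bimodule structure given by $\iota$, which is exactly the structure seen through $\varphi_1$; since $\varphi_1$ satisfies (UDE), there is a \emph{unique} $E\in\Der(B_1,X)$ with $E\varphi_1=D_0\varphi_2$, yielding a lift $\rho\colon B_1\to P\ltimes X$ of $\varkappa_1$. Because $\varkappa_1\varphi_1=\iota=\varkappa_2\varphi_2$ and $E\varphi_1=D_0\varphi_2$, the homomorphisms $\rho$ and $\tau$ agree on $A$, so the universal property of the pushout gives a unique $\sigma\colon P\to P\ltimes X$ with $\sigma\varkappa_1=\rho$ and $\sigma\varkappa_2=\tau$. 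Applying $\pi$ yields $\pi\sigma\varkappa_1=\varkappa_1$ and $\pi\sigma\varkappa_2=\varkappa_2$, whence $\pi\sigma=\mathrm{id}_P$ by the uniqueness clause of the pushout; so $\sigma$ is a section, and its derivation $D$ satisfies $D\varkappa_2=D_0$. Injectivity is the same argument in reverse: if sections $\sigma,\sigma'$ satisfy $\sigma\varkappa_2=\sigma'\varkappa_2$, then the associated derivations $D\varkappa_1,D'\varkappa_1\in\Der(B_1,X)$ have equal restrictions to $A$ (as $\varkappa_1\varphi_1=\varkappa_2\varphi_2$), so the injectivity half of (UDE) for $\varphi_1$ forces $\sigma\varkappa_1=\sigma'\varkappa_1$, and pushout uniqueness gives $\sigma=\sigma'$.

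I expect the only genuine care to be in the bookkeeping of the several restricted bimodule structures and in verifying that the trivial extension is a bona fide $\mathop{\widehat\otimes}$-algebra with jointly continuous multiplication; everything else is a formal diagram chase. The conceptual crux is the observation that, once (UDE) is phrased as ``lifting against $\pi$'', the pushout converts extension of derivations from $B_2$ to $P$ into extension of derivations from $A$ to $B_1$, which is precisely the hypothesis imposed on $\varphi_1$.
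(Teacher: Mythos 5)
Your proof is correct, and its engine is the same as the paper's: the paper's Proposition~\ref{univprder} is proved by exactly your square-zero-extension dictionary, realizing derivations as lifts of homomorphisms into $(B_1\ast_A B_2)\oplus X$ and then invoking the universal property of the pushout, and the surjectivity half of Proposition~\ref{DDPsta} is then derived just as you do (lift $D_2\varphi_2$ through $\varphi_1$ using (UDE), glue the two resulting derivations via the pushout). Where you genuinely diverge is injectivity: the paper observes that $\widetilde\varphi_X$ is injective for all $X$ exactly when $\varphi$ is an epimorphism of $\mathop{\widehat\otimes}$-algebras (citing the Fr\'echet-algebra analogue in \cite[Theorem 3.20]{AP}) and then uses the purely categorical fact that epimorphisms are preserved by pushouts, whereas you deduce $D\varkappa_1=D'\varkappa_1$ directly from the injectivity half of (UDE) for $\varphi_1$ applied to $X$ restricted along $\varkappa_1$, and conclude $D=D'$ from the uniqueness clause of the pushout. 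Your route is more self-contained (no external characterization of epimorphisms is needed) and stays entirely inside the lifting formalism; the paper's route is shorter on the page and makes the link with epimorphisms explicit, which it reuses elsewhere. One small bookkeeping item you flag yourself and should indeed check is that $C\ltimes X$ is a $\mathop{\widehat\otimes}$-algebra and that the dictionary matches \emph{continuous} derivations with \emph{continuous} homomorphisms; this is exactly what the paper handles by writing the glued homomorphism as $c\mapsto(\alpha(c),D(c))$ and verifying $\alpha=1$.
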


For the proof we need an auxiliary proposition.
Note that every $( B_1\mathbin{\ast_A}B_2)$-$\mathop{\widehat\otimes}$-bimodule is both a $B_1$-$\mathop{\widehat\otimes}$- and $B_2$-$\mathop{\widehat\otimes}$-bimodule with the multiplications given by restriction of the canonical homomorphisms $\varkappa_1$ and $\varkappa_2$.

\begin{pr}\label{univprder}
Let $\varphi_1\!: A\to B_1$ and $\varphi_2\!: A\to B_2$ be
$\mathop{\widehat\otimes}$-algebra homomorphisms and~$X$ be a $(B_1\mathbin{\ast_A} B_2)$-$\mathop{\widehat\otimes}$-bimodule. Suppose  that
$D_1\in \Der(B_1,X)$, $D_2\in \Der(B_2,X)$ and $D_1\varphi_1=D_2\varphi_2$.  Then there exists a unique
continuous derivation $D$   making the diagram
\begin{equation*}
\xymatrix@C=20pt{
   A \ar[rr]^{\varphi_2}\ar[d]_{\varphi_1} && B_2 \ar[d]_{\varkappa_2}\ar[ddr]^{D_2}&  \\
 B_1\ar[rr]^{\varkappa_1}\ar[drrr]_{D_1}&& B_1\mathbin{\ast_A} B_2\ar@{-->}[dr]^{\!\!D }\\
     &&&X
 }
 \end{equation*}
 commutative.
\end{pr}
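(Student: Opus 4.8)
The plan is to translate the entire statement into the universal property of the pushout by means of the standard \emph{square-zero} (trivial) extension. Given the $(B_1\ast_A B_2)$-$\mathop{\widehat\otimes}$-bimodule $X$, I would form the $\mathop{\widehat\otimes}$-algebra $E:=(B_1\ast_A B_2)\oplus X$, whose underlying locally convex space is the direct sum and whose multiplication is $(c,x)(c',x'):=(cc',\,c\cdot x'+x\cdot c')$. Since the multiplication on $B_1\ast_A B_2$ is jointly continuous and the two bimodule actions $(B_1\ast_A B_2)\times X\to X$ are jointly continuous, this multiplication is jointly continuous; being a direct sum of complete spaces, $E$ is complete, so it is a unital $\mathop{\widehat\otimes}$-algebra with identity $(1,0)$. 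The point of the construction is that, with $\pi\colon E\to B_1\ast_A B_2$ the projection and $\gamma\colon C\to B_1\ast_A B_2$ a fixed $\mathop{\widehat\otimes}$-algebra homomorphism, the continuous linear maps $D\colon C\to X$ that are derivations (for the $C$-bimodule structure on $X$ pulled back along $\gamma$) correspond bijectively to the $\mathop{\widehat\otimes}$-algebra homomorphisms $\sigma\colon C\to E$ with $\pi\sigma=\gamma$, via $\sigma(c)=(\gamma(c),D(c))$. As every derivation kills the identity, these homomorphisms are automatically unital.

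Next I would produce the two building-block homomorphisms. Because $D_1\in\Der(B_1,X)$ with the $B_1$-action on $X$ taken through $\varkappa_1$, the map $\sigma_1\colon B_1\to E$, $\sigma_1(b):=(\varkappa_1(b),D_1(b))$, is a unital $\mathop{\widehat\otimes}$-algebra homomorphism, and likewise $\sigma_2\colon B_2\to E$, $\sigma_2(b):=(\varkappa_2(b),D_2(b))$. These agree on $A$: using the commutativity $\varkappa_1\varphi_1=\varkappa_2\varphi_2$ of the pushout square together with the hypothesis $D_1\varphi_1=D_2\varphi_2$, one gets $\sigma_1\varphi_1=\sigma_2\varphi_2$. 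By the universal property of the pushout (which exists by Proposition~\ref{pushoex}) there is a unique $\mathop{\widehat\otimes}$-algebra homomorphism $\sigma\colon B_1\ast_A B_2\to E$ with $\sigma\varkappa_1=\sigma_1$ and $\sigma\varkappa_2=\sigma_2$.

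Finally I would check that $\sigma$ is a section of $\pi$ and read off $D$. The composite $\pi\sigma$ is an endomorphism of $B_1\ast_A B_2$ satisfying $\pi\sigma\varkappa_i=\varkappa_i$ for $i=1,2$; since the identity also has this property, the uniqueness half of the pushout property forces $\pi\sigma=\mathrm{id}$. Hence $\sigma$ has the form $\sigma(c)=(c,D(c))$ for a unique continuous linear map $D\colon B_1\ast_A B_2\to X$, and the fact that $\sigma$ is an algebra homomorphism is precisely the statement that $D$ is a derivation; the relations $\sigma\varkappa_i=\sigma_i$ give $D\varkappa_1=D_1$ and $D\varkappa_2=D_2$. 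Uniqueness of $D$ follows the same way: any derivation $D'$ with $D'\varkappa_i=D_i$ yields a homomorphism $c\mapsto(c,D'(c))$ satisfying the two defining relations of $\sigma$, hence equal to $\sigma$ by uniqueness in the pushout, whence $D'=D$. I expect no deep obstacle here; the only points needing genuine care are the joint continuity and completeness of $E$ (so that it is a legitimate $\mathop{\widehat\otimes}$-algebra to which the universal property applies) and the bookkeeping that the various actions on $X$ are the restrictions, through $\varkappa_1$ and $\varkappa_2$, of the single $(B_1\ast_A B_2)$-bimodule structure.
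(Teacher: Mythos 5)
Your proposal is correct and follows essentially the same route as the paper: both construct the square-zero extension $(B_1\mathbin{\ast_A}B_2)\oplus X$, lift $D_1,D_2$ to algebra homomorphisms into it, invoke the pushout's universal property, and use the uniqueness clause to force the first component to be the identity so that the second component is the desired derivation. Your write-up is, if anything, slightly more explicit about verifying that the two lifted homomorphisms agree on $A$ and about the continuity and completeness of the extension.
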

\begin{proof}
Consider the locally convex space
$(B_1\mathbin{\ast_A}B_2)\oplus X$ as a $\mathop{\widehat\otimes}$-algebra by letting
$$
(c,x)(c',x')=(cc',cx'+xc')\qquad (c,c'\in B_1\mathbin{\ast_A} B_2,\; x,x'\in X).
$$
For $j=1,2$, define a $\mathop{\widehat\otimes}$-algebra homomorphism by
$$
\psi_j\!:B_j\to (B_1\mathbin{\ast_A} B_2)\oplus X
\!:b\mapsto (\varkappa_j(b), D_j(b)).
$$
By the pushout property, there is  a unique homomorphism
$$
\psi\!: B_1\mathbin{\ast_A} B_2\to (B_1\mathbin{\ast_A} B_2)\oplus X
$$
such that  $\psi\varkappa_j=\psi_j$
($j=1,2$). Write $\psi$ as $c\mapsto (\alpha(c),D(c))$, where $\alpha$ and $D$ are continuous linear maps. It is easy to see that $\alpha$ is an endomorphism of $B_1\mathbin{\ast_A} B_2$ and $D$ is an
$\alpha$-derivation, i.e., $D(cc')=\alpha(c)D(c')+D(c)\alpha(c')$ ($c,c'\in B_1\mathbin{\ast_A} B_2$). Since $\alpha\varkappa_j=\varkappa_j$  ($j=1,2$),
the uniqueness of $\psi$ implies the uniqueness of $\al$. Hence $\alpha=1$. Therefore $D$ is a derivation such that $D \varkappa_j=D_j$ ($j=1,2$). Finally, $D$ is unique since so is~$\psi$.
\end{proof}

\begin{proof}[Proof of Proposition~\ref{DDPsta}]
Consider the pushout diagram in~\eqref{puUDE} and suppose that $\varphi_1$ satisfies Property (UDE). 
Take a $(B_1\ast_A B_2)$-$\mathop{\widehat\otimes}$-bimodule $X$  and $D_2\in
\Der(B_2,X)$.   Since $D_2\varphi_2$ is a derivation of~$A$ and $\varphi_1$
satisfies Property (UDE), there exists   $D_1\in
\Der(B_1,X)$ such that $D_1\varphi_1=D_2\varphi_2$. By  the universal
property for derivations in Proposition~\ref{univprder}, there is  $D\in
\Der(B_1\ast_A B_2,X)$  such that $D_1=D\varkappa_1$ and
$D_2=D\varkappa_2$. Hence,  $\Der(B_1\ast_A B_2,X)\to \Der(B_2,X)$ is
surjective.

Note that $\widetilde\varphi_X$ is injective for each $X$  if and only if $\varphi$ is an epimorphism; cf. the Fr\'echet algebra case in \cite[Theorem 3.20]{AP}. Since epimorphisms are preserved by pushouts in every category, $\Der(B_1\ast_A B_2,X)\to\Der(B_2,X)$ is injective for each~$X$.
Thus, $\varkappa_2$ satisfies Property (UDE).
\end{proof}

Now we apply Proposition~\ref{DDPsta} to analytic smash products.

\begin{thm}\label{UDEinh}
Let $\be\!:H\to K$ be a Hopf $\mathbin{\widehat{\otimes}}$-algebra homomorphism, $R$ and $S$ be an $H$- and $K$-$\mathbin{\widehat{\otimes}}$-module algebras, resp., and $\al\!: R\to S$ be an $H$-$\mathbin{\widehat{\otimes}}$-module algebra homomorphism. If each of $\al$ and $\be$ has dense range and satisfies Property (UDE), then so is the $\mathbin{\widehat{\otimes}}$-algebra homomorphism  $\al\mathop{\widehat{\#}}\be\!:R \mathop{\widehat{\#}} H \to S \mathop{\widehat{\#}} K$ defined in Lemma~\ref{homsmpr}.
\end{thm}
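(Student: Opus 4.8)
The plan is to factor $\al\mathop{\widehat{\#}}\be$ as a composite of two smash-product homomorphisms, each realized as a pushout of a map satisfying Property (UDE), and then to apply Proposition~\ref{DDPsta} to each factor. Since $S$ is a $K$-$\mathop{\widehat\otimes}$-module algebra and $\be\!:H\to K$, we regard $S$ as an $H$-$\mathop{\widehat\otimes}$-module algebra via $h\cdot s:=\be(h)\cdot s$; the hypothesis that $\al$ is an $H$-module algebra homomorphism then reads $\al(h\cdot r)=\be(h)\cdot\al(r)$, which is exactly the compatibility required by Lemma~\ref{homsmpr}. Consequently both
$$
\al\mathop{\widehat{\#}} 1_H\!:R\mathop{\widehat{\#}} H\to S\mathop{\widehat{\#}} H
\qquad\text{and}\qquad
1_S\mathop{\widehat{\#}}\be\!:S\mathop{\widehat{\#}} H\to S\mathop{\widehat{\#}} K
$$
are well-defined $\mathop{\widehat\otimes}$-algebra homomorphisms, and on elementary tensors $(1_S\mathop{\widehat{\#}}\be)(\al\mathop{\widehat{\#}} 1_H)(r\otimes h)=\al(r)\otimes\be(h)=(\al\mathop{\widehat{\#}}\be)(r\otimes h)$, so that $\al\mathop{\widehat{\#}}\be=(1_S\mathop{\widehat{\#}}\be)(\al\mathop{\widehat{\#}} 1_H)$.

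For the first factor I would invoke Proposition~\ref{UDElpu}, applicable because $\al$ has dense range, which presents $\al\mathop{\widehat{\#}} 1_H$ as the pushout of $\al$ along $i_R$. Since $\al$ satisfies Property (UDE), it is the map $\varphi_1$ of Proposition~\ref{DDPsta}, so that the corresponding $\varkappa_2=\al\mathop{\widehat{\#}} 1_H$ satisfies Property (UDE). Symmetrically, for the second factor I would use Proposition~\ref{UDErpu}, applicable because $\be$ has dense range, which presents $1_S\mathop{\widehat{\#}}\be$ as the pushout of $\be$ along $j_H$; since $\be$ satisfies Property (UDE), it plays the role of $\varphi_1$ in Proposition~\ref{DDPsta}, and the corresponding $\varkappa_2=1_S\mathop{\widehat{\#}}\be$ therefore satisfies Property (UDE).

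It remains to observe that Property (UDE) passes to composites. Given $\mathop{\widehat\otimes}$-algebra homomorphisms $\varphi\!:A\to B$ and $\psi\!:B\to C$ and a $C$-$\mathop{\widehat\otimes}$-bimodule $X$, viewed as a $B$-bimodule via $\psi$, the functorial identity $\widetilde{(\psi\varphi)}_X=\widetilde\varphi_X\circ\widetilde\psi_X$ holds; if $\varphi$ and $\psi$ both satisfy Property (UDE), the two maps on the right are bijective for every such $X$, hence so is their composite, and therefore $\psi\varphi$ satisfies Property (UDE). Taking $\varphi=\al\mathop{\widehat{\#}} 1_H$ and $\psi=1_S\mathop{\widehat{\#}}\be$ finishes the argument.

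I expect the substance of the proof to be carried entirely by the three cited results, so that the remaining work is organizational: keeping the $H$- and $K$-module algebra structures on $S$ straight so that each factor genuinely matches the setup of Propositions~\ref{UDElpu} and~\ref{UDErpu}, and confirming that the dense-range hypotheses on $\al$ and $\be$ are precisely what those two pushout statements consume.
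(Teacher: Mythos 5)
Your proposal is correct and follows essentially the same route as the paper: factor $\al\mathop{\widehat{\#}}\be$ through $S\mathop{\widehat{\#}} H$, realize each factor as a pushout via Propositions~\ref{UDElpu} and~\ref{UDErpu}, apply Proposition~\ref{DDPsta}, and use stability of Property (UDE) under composition. Your explicit verification of the composition stability and of the $H$-module structure on $S$ via $\be$ only spells out details the paper leaves implicit.
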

\begin{proof}
The density obviously inherits. Further, write $\al\mathop{\widehat{\#}} \be$ as the following composition:
$$
R \mathop{\widehat{\#}} H\xrightarrow{\al\mathop{\widehat{\#}} 1}  S \mathop{\widehat{\#}} H\xrightarrow{1\mathop{\widehat{\#}}\be}  S \mathop{\widehat{\#}} K.
$$
It is easy to see that Property (UDE) is stable under composition. So it suffices to show that  $\al\mathop{\widehat{\#}} 1$ and $1\mathop{\widehat{\#}}\be$ have this property. Since Property (UDE)  is preserved by pushouts according to Proposition~\ref{DDPsta}, the result follows from Propositions~\ref{UDElpu} and~\ref{UDErpu}, which assert that $\al\mathop{\widehat{\#}} 1$ and $1\mathop{\widehat{\#}}\be$ are obtained by pushouts with $\al$ and $\be$, respectively.
\end{proof}

\section{Homological epimorphisms  and relatively quasi-free algebras}
\label{s:rsfhe}

\subsection*{Definitions and statement of main result}

Our main reference on the homological theory of topological algebras is  Helemskii's book~\cite{X1} (the Russian edition is known as `first black book').  Additional facts regarding a more general relative theory can be found in~\cite{Pir_qfree}.

Suppose that $R$ is a $\mathop{\widehat\otimes}$-algebra. Recall that an \emph{$R$-$\mathop{\widehat\otimes}$-algebra} is a pair $(A,\eta_A)$, where $A$ is a $\mathop{\widehat\otimes}$-algebra and $\eta_A \!: R \to A$ is a $\mathop{\widehat\otimes}$-algebra homomorphism. Note that each $A$-$\mathop{\widehat\otimes}$-module is automatically an $R$-$\mathop{\widehat\otimes}$-module via the restriction-of-scalars functor along $\eta_A$. We denote by $(A,R){\mbox{-}\!\mathop{\mathsf{mod}}}$ the category of left $A$-$\mathop{\widehat\otimes}$-modules endowed with the split exact structure relative to~$R$. This means that the admissible sequences in $(A,R){\mbox{-}\!\mathop{\mathsf{mod}}}$  are those that are split by $R$-$\mathop{\widehat\otimes}$-module morphisms; cf. \cite[Appendix, Example 10.1 and 10.3]{Pir_qfree}. In particular, when $R = \mathbb{C}$, we recover the standard definition of an admissible (or $\mathbb{C}$-split) sequence of $A$-$\mathop{\widehat\otimes}$-modules used in \cite{X1}. When considering $\mathop{\widehat\otimes}$-bimodules over  an $R$-$\mathop{\widehat\otimes}$-algebra $A$ (from the left) and an $S$-$\mathop{\widehat\otimes}$-algebra $B$ (from the right), we denote the corresponding category by  $(A,R){\mbox{-}\!\mathop{\mathsf{mod}}\!\mbox{-}}(B,S)$. In the case when $R=S=\mathbb{C}$,  we write simply  $A{\mbox{-}\!\mathop{\mathsf{mod}}\!\mbox{-}} B$.
When we talk on a projective resolution in $(A,R){\mbox{-}\!\mathop{\mathsf{mod}}\!\mbox{-}}(B,S)$, we mean a complex consisting of objects projective in the corresponding exact category and splitting in $R{\mbox{-}\!\mathop{\mathsf{mod}}\!\mbox{-}} S$.

Let $A$ be a $\mathop{\widehat\otimes}$-algebra and let $X$ and $Y$ be a right and left $A$-$\mathop{\widehat\otimes}$-module, respectively. Then  the \emph{projective $A$-module tensor product} $X \ptens{A} Y$ is defined as the completion of the quotient space of $X \mathop{\widehat\otimes} Y$ by the closure of the linear hull of all elements of the form $x\cdot a\otimes y-x\otimes a\cdot y$ ($x\in X$, $y\in Y$, $a\in A$); see~\cite{X1}.

Recall that an \emph{$R$-$S$-homomorphism} from an $R$-$\mathop{\widehat\otimes}$-algebra to an $S$-$\mathop{\widehat\otimes}$-algebra is defined as a pair $(f,g)$, where $f\!: A \to B$  and $g\!: R \to S$ are $\mathop{\widehat\otimes}$-algebra homomorphisms, such that the diagram
 \begin{equation*}
\xymatrix@C=20pt{
A\ar[rr]^f && B  \\
R\ar[rr]^g\ar[u]^{\eta_A}&& S\ar[u]^{\eta_B}
 }
 \end{equation*}
is commutative. In the case when $S=R$, we obtain an \emph{$R$-homomorphism}.

\begin{df}\label{twsrelhoep}
\cite[Definition 6.2]{Pir_qfree}
An $R$-$S$-homomorphism $f\!: A \to B$ from an $R$-$\mathop{\widehat\otimes}$-algebra $A$ to an $S$-$\mathop{\widehat\otimes}$-algebra $B$ is called a \emph{two-sided relative homological epimorphism} if $f$ is an epimorphism
of $\mathop{\widehat\otimes}$-algebras and $A$ is acyclic relative to the functor
$$B\ptens{A}(-)\ptens{A}B\!: (A,R){\mbox{-}\!\mathop{\mathsf{mod}}\!\mbox{-}}(A,R)\to (B,S){\mbox{-}\!\mathop{\mathsf{mod}}\!\mbox{-}}(B,S),$$
i.e., it sends some (equivalently, each) projective resolution of $A$ in $(A,R){\mbox{-}\!\mathop{\mathsf{mod}}\!\mbox{-}}(A,R)$ to a sequence that splits in $S{\mbox{-}\!\mathop{\mathsf{mod}}\!\mbox{-}} S$.
\end{df}
We omit the definitions of \emph{left and right relative homological epimorphisms}; for details see \cite[Definition 6.1]{Pir_qfree} with corrections in \cite{Pir_co1,Pir_co2}. The only fact that we need here is that a two-sided relative homological epimorphism is also left and right \cite[Proposition 6.5]{Pir_qfree}.

In the case when $R=S=\mathbb{C}$,  Definition~\ref{twsrelhoep} takes more simple form:
\begin{df}\label{Homoepi}
An epimorphism $A\to B$ of $\mathop{\widehat\otimes}$-algebras is said to be \emph{homological} if $A$ is acyclic relative to the functor
\begin{equation}
\label{tenprfu}
B\ptens{A}(-)\ptens{A}B\!: A{\mbox{-}\!\mathop{\mathsf{mod}}\!\mbox{-}} A\to B{\mbox{-}\!\mathop{\mathsf{mod}}\!\mbox{-}} B;
\end{equation}
see \cite[Definition 1.3]{T2}, \cite[Definition 3.2]{Pir_stbflat} or \cite[Definition 6.3]{Pir_qfree}.
\end{df}

The following theorem is our main result.

\begin{thm}\label{AMheiffsol}
Let $\mathfrak{g}$ be a finite-dimensional complex Lie algebra.  Then the Arens-Michael envelope
$U(\mathfrak{g})\to \widehat U(\mathfrak{g})$ is a homological epimorphism if and only if~$\mathfrak{g}$ is solvable.
\end{thm}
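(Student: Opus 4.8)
The necessity of solvability is due to Pirkovskii \cite{Pi4}, so the plan is to prove the sufficiency. Assume $\mathfrak{g}$ is solvable and fix, by Theorem~\ref{AMede}, a decomposition \eqref{fgdec} into $1$-dimensional pieces together with the compatible smash product decompositions of $U(\mathfrak{g})$ and $\widehat U(\mathfrak{g})$ and the non-increasing sequence $i_1,\ldots,i_n$ in $[0,\infty]$. For $0\le k\le n$ set
$$
A_k:=(\cdots(U(\mathfrak{f}_1)\mathop{\#}U(\mathfrak{f}_2))\mathop{\#}\cdots)\mathop{\#}U(\mathfrak{f}_k),\qquad
B_k:=(\cdots(\mathfrak{A}_{i_1}\mathop{\widehat{\#}}\mathfrak{A}_{i_2})\mathop{\widehat{\#}}\cdots)\mathop{\widehat{\#}}\mathfrak{A}_{i_k},
$$
with $A_0=B_0=\mathbb{C}$, and let $f_k\!:A_k\to B_k$ be the homomorphism coming from the compatibility clause of Theorem~\ref{AMede}. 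Then $A_k=A_{k-1}\mathop{\#}U(\mathfrak{f}_k)$ and $B_k=B_{k-1}\mathop{\widehat{\#}}\mathfrak{A}_{i_k}$, where $U(\mathfrak{f}_k)\cong\mathbb{C}[z]$, and that same clause says $f_k=f_{k-1}\mathop{\widehat{\#}}\be_k$ in the sense of Lemma~\ref{homsmpr}, with $\be_k\!:\mathbb{C}[z]\to\mathfrak{A}_{i_k}$ the embedding of Lemma~\ref{CzAsHh}. Since $A_n=U(\mathfrak{g})$, $B_n=\widehat U(\mathfrak{g})$ and $f_n=\io$, it suffices to prove that $f_n$ is a homological epimorphism.

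First I would check, by induction on $k$, that every $f_k$ has dense range and satisfies Property (UDE). Each $\be_k$ has dense range since $\mathbb{C}[z]$ is dense in $\mathfrak{A}_{i_k}$, and it satisfies Property (UDE): for $i_k=0$ it is the Arens--Michael envelope $\mathbb{C}[z]\to\mathcal{O}(\mathbb{C})$, which has (UDE) by \cite{Pir_qfree}, while for $i_k>0$ one factors it as $\mathbb{C}[z]\to\mathcal{O}(\mathbb{C})\to\mathfrak{A}_{i_k}$ and combines (UDE) of the two arrows using its stability under composition. The base $f_0=\mathrm{id}_{\mathbb{C}}$ is trivial, and Theorem~\ref{UDEinh} applied to $f_k=f_{k-1}\mathop{\widehat{\#}}\be_k$ transports density and Property (UDE) from $f_{k-1}$ and $\be_k$ to $f_k$; in particular every $f_k$, having dense range, is an epimorphism of $\mathop{\widehat\otimes}$-algebras.

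Next I would prove, again by induction on $k$, that each $f_k$ is a homological epimorphism, the case $k=0$ being trivial. For the inductive step I would apply Theorem~\ref{PiTh} to the $A_{k-1}$-$B_{k-1}$-homomorphism $(f_k,f_{k-1})\!:(A_k,A_{k-1})\to(B_k,B_{k-1})$; the defining square commutes because $f_k$ restricts to $f_{k-1}$ on $A_{k-1}\otimes 1$. Condition (1) is exactly the inductive hypothesis that $f_{k-1}$ is a homological epimorphism. The extension $A_k=A_{k-1}\mathop{\#}\mathbb{C}[z]$ is an Ore extension (Remark~\ref{OeAsp}), hence relatively quasi-free and of $(f_2)$-finite type over $A_{k-1}$ by Proposition~\ref{Oeqfft}; this gives Condition (4), and, combined with Property (UDE) of $f_k$ from the previous paragraph (together with the fact that $f_{k-1}$ is an epimorphism), it yields via Theorem~\ref{Brelhoep} that $f_k$ is a two-sided, hence (by \cite[Proposition 6.5]{Pir_qfree}) a one-sided, relative homological epimorphism, which is Condition (2). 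Finally $A_k\cong A_{k-1}\mathop{\widehat\otimes}\mathbb{C}[z]$ and $B_k\cong B_{k-1}\mathop{\widehat\otimes}\mathfrak{A}_{i_k}$ are free, so projective, as left modules over $A_{k-1}$ and $B_{k-1}$, which is Condition (3). Theorem~\ref{PiTh} then shows $f_k$ is a homological epimorphism, and for $k=n$ this is the assertion.

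The main obstacle, and the reason the two-step scheme of \cite{Pir_qfree} has to be reorganized, sits in Condition (2). At an intermediate stage the base map $f_{k-1}\!:A_{k-1}\to B_{k-1}$ is \emph{not} an Arens--Michael envelope, because the completions $\mathfrak{A}_{i_j}$ need not equal $\mathcal{O}(\mathbb{C})$; hence one cannot invoke the (UDE)-property of Arens--Michael envelopes, nor represent $B_{k-1}$ as an iterated analytic Ore extension. This is circumvented precisely by establishing Property (UDE) for the whole tower $(f_k)$ through the pushout description of smash products (Theorem~\ref{UDEinh}, resting on Propositions~\ref{UDElpu}, \ref{UDErpu} and \ref{DDPsta}) and by replacing Theorem~\ref{AMerelhoep} with its (UDE)-version Theorem~\ref{Brelhoep}. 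All the genuinely analytic content --- the smash product decomposition and the behaviour of the algebras $\mathfrak{A}_s$ --- is packaged into Theorem~\ref{AMede} and Lemma~\ref{CzAsHh}.
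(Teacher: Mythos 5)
Your argument follows the paper's proof almost step for step: the same induction along the smash product tower of Theorem~\ref{AMede}, with Theorem~\ref{PiTh} driving the inductive step, Condition~(2) supplied by Theorem~\ref{Brelhoep} via Property (UDE) and relative quasi-freeness (Proposition~\ref{Oeqfft}), and (UDE) propagated up the tower by Theorem~\ref{UDEinh}. The only organizational difference is that you run a separate preliminary induction carrying ``dense range $+$ (UDE)'' as an invariant of the $f_k$, whereas the paper extracts (UDE) of $f_{k-1}$ at each step from the inductive hypothesis that $f_{k-1}$ is a homological epimorphism, using Theorem~\ref{homeUDE}. That reorganization is harmless in itself, but it is exactly where a gap appears.

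The gap is your justification of Property (UDE) for $\be_k\colon\mathbb{C}[z]\to\mathfrak{A}_{i_k}$ when $i_k>0$. You factor $\be_k$ as $\mathbb{C}[z]\to\mathcal{O}(\mathbb{C})\to\mathfrak{A}_{i_k}$ and invoke stability of (UDE) under composition, but you never prove that the second arrow $\mathcal{O}(\mathbb{C})\to\mathfrak{A}_{i_k}$ satisfies (UDE); it is not an Arens--Michael envelope, so the general result of \cite{Pir_qfree} does not apply to it, and the surjectivity half of (UDE) (extending a derivation $\mathcal{O}(\mathbb{C})\to X$ continuously to $\mathfrak{A}_{i_k}$) genuinely requires the growth estimates on the coefficients $1/n!^s$. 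This is not cosmetic: your base case $k=1$ of the homological-epimorphism induction runs through Theorem~\ref{Brelhoep} and hence already needs (UDE) of $\be_1$, and $i_1>0$ does occur (it is the largest exponent in the non-increasing sequence). The repair is available in the paper and makes the detour through $\mathcal{O}(\mathbb{C})$ unnecessary: Proposition~\ref{CzAshome} shows directly (by exhibiting the Koszul-type resolution \eqref{Asres} and the explicit splitting $\si$) that $\mathbb{C}[z]\to\mathfrak{A}_s$ is a homological epimorphism for every $s\in[0,\infty]$, and Theorem~\ref{homeUDE} then gives Property (UDE) for it. You never invoke Proposition~\ref{CzAshome}, but it is the analytic input your argument is missing.
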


For the proof we need a result on homological epimorphisms for analytic smash products with the Hopf $\mathop{\widehat\otimes}$-algebra $\mathfrak{A}_s$  defined in~\eqref{faAsdef} but in a more general formulation; see Theorem~\ref{Assmpr} below.

\subsection*{Smash products with $\mathfrak{A}_s$}

The following proposition is the first step of the proof of Theorem~\ref{AMheiffsol}.

\begin{pr}\label{CzAshome}
Let $s\in[0,+\infty]$. Then $\be\!:\mathbb{C}[z]\to \mathfrak{A}_s$ in Lemma~\ref{CzAsHh} is a homological epimorphism.
\end{pr}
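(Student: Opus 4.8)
The plan is to apply the definition of homological epimorphism directly, using the standard length-one projective bimodule resolution of the commutative algebra $A:=\mathbb{C}[z]$. Write $B:=\mathfrak{A}_s$. First I note that $\be$ is an epimorphism of $\mathop{\widehat\otimes}$-algebras, since $\mathbb{C}[z]$ is dense in $\mathfrak{A}_s$ and a homomorphism with dense range is an epimorphism. The Koszul resolution
\[
0\to A\mathop{\widehat\otimes}A\xrightarrow{\,d\,}A\mathop{\widehat\otimes}A\xrightarrow{\,\mu\,}A\to0,\qquad d(a\otimes b)=az\otimes b-a\otimes zb,\ \ \mu(a\otimes b)=ab,
\]
is a projective resolution of $A$ in $A{\mbox{-}\!\mathop{\mathsf{mod}}\!\mbox{-}}A$: the bimodule $A\mathop{\widehat\otimes}A$ is free, $d$ is injective with image $\Ker\mu$ (since $A\mathop{\widehat\otimes}A\cong\mathbb{C}[z,w]$ and $\mu$ corresponds to restriction to the diagonal), and the sequence is admissible because $\mathbb{C}[z]$ carries the strongest locally convex topology, in which every subspace is complemented. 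Applying the functor $B\ptens{A}(-)\ptens{A}B$ and using $B\ptens{A}(A\mathop{\widehat\otimes}A)\ptens{A}B\cong B\mathop{\widehat\otimes}B$ together with $B\ptens{A}B\cong B$ (as $\be$ is an epimorphism), I obtain the complex
\[
0\to B\mathop{\widehat\otimes}B\xrightarrow{\,\widetilde d\,}B\mathop{\widehat\otimes}B\xrightarrow{\,\widetilde m\,}B\to0,
\]
where $\widetilde d(b\otimes b')=zb\otimes b'-b\otimes zb'$ and $\widetilde m(b\otimes b')=bb'$. Thus the proposition reduces to showing that this sequence splits in the category of locally convex spaces.

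Second, I would identify $B\mathop{\widehat\otimes}B=\mathfrak{A}_s\mathop{\widehat\otimes}\mathfrak{A}_s$ (for $s<\infty$ a nuclear Fr\'echet space) with the space of power series $F=\sum_{m,n\ge0}c_{mn}z^mw^n$ such that $\|F\|_{r,t}:=\sum_{m,n}|c_{mn}|\,r^mt^n/(m!^s n!^s)<\infty$ for all $r,t>0$. In these coordinates $\widetilde d$ is multiplication by $(z-w)$, while $\widetilde m(F)=F(z,z)$. A continuous section of $\widetilde m$ is $\eta\colon g\mapsto g\otimes1$, and a candidate contracting homotopy is the ``division by $z-w$'' operator
\[
\kappa F:=-\sum_{m,n}c_{mn}\sum_{i=0}^{n-1}z^{m+n-1-i}w^i,
\]
which is designed so that $(z-w)\,\kappa F=F-\eta\widetilde m F$. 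A direct check gives the homotopy identities $\widetilde d\kappa+\eta\widetilde m=\mathrm{id}$ and $\kappa\widetilde d=\mathrm{id}$; hence, once $\kappa$ is known to be continuous, the complex is split exact and $\be$ is a homological epimorphism.

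The main obstacle is the continuity of $\kappa$, i.e. the seminorm estimate. Reading off the coefficient of $z^pw^q$ in $\kappa F$ and bounding, I would use $\binom{m+n-1}{q}\le2^{m+n-1}$, so that $1/((m+n-1-q)!\,q!)\le 2^{m+n-1}/(m+n-1)!$, together with $(m+n-1)!\ge m!\,n!/(m+n)$ and $r'^{\,m+n-1-q}t'^{\,q}\le M^{m+n}$ where $M=\max(r',t',1)$. These give, for the inner sum over $q$, the bound $n\,(m+n)^s(2^{s}M)^{m+n}/(m!^s n!^s)$; choosing $L>2^{s}M$ then yields $\|\kappa F\|_{r',t'}\le C\,\|F\|_{L,L}$ with $C=\sup_{m,n}n(m+n)^s(2^{s}M/L)^{m+n}<\infty$, the geometric factor dominating the polynomial one. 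This proves continuity for every $s\in[0,\infty)$. Finally, the case $s=\infty$ is easier: here $B\mathop{\widehat\otimes}B=\mathbb{C}[[z,w]]$ carries the product topology and each coefficient of $\kappa F$ is a finite sum of coefficients of $F$, so $\kappa$ is automatically continuous. In all cases the complex splits, which is exactly the assertion that $\be$ is a homological epimorphism.
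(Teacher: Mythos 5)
Your proof is correct, but it takes a genuinely different route from the paper's. The paper does not work with the bimodule resolution at all: since $\mathfrak{A}_s$ is a Hopf $\mathop{\widehat\otimes}$-algebra (Lemma~\ref{CzAsHh}), it invokes \cite[Proposition~3.7]{Pir_stbflat} to reduce the homological epimorphism property to the \emph{one-sided} ``weak localization'' condition for the trivial module $\mathbb{C}$; it then only has to split the one-variable complex $0 \leftarrow \mathbb{C} \leftarrow \mathfrak{A}_s \leftarrow \mathfrak{A}_s \leftarrow 0$ (image of the Koszul resolution of $\mathbb{C}$ over $\mathbb{C}[z]$), which is done with the shift $\si(a)=\sum_n a_{n+1}z^n$ and a one-line estimate $\|\si(a)\|_{r,s}\leqslant \tfrac{C}{r}\|a\|_{2r,s}$. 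You instead verify Definition~\ref{Homoepi} directly: you take the two-variable Koszul bimodule resolution, identify the image complex as multiplication by $z-w$ on $\mathfrak{A}_s\mathop{\widehat\otimes}\mathfrak{A}_s$, and split it by the divided-difference operator $\ka F=-(F(z,w)-F(z,z))/(z-w)$, whose continuity you establish with the estimate via $\binom{m+n-1}{q}\le 2^{m+n-1}$ and $(m+n-1)!\ge m!\,n!/(m+n)$; I checked the coefficient bookkeeping and the homotopy identities, and they are sound (the identification of $\mathfrak{A}_s\mathop{\widehat\otimes}\mathfrak{A}_s$ with the two-variable Köthe space, which you use for the seminorm formula, is the standard $\ell^1$-tensor-product fact and is fine). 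What the paper's route buys is brevity and a much lighter estimate, at the price of relying on the Hopf structure and on Pirkovskii's reduction; what your route buys is self-containedness --- it needs neither the comultiplication on $\mathfrak{A}_s$ nor \cite[Proposition~3.7]{Pir_stbflat} --- at the price of a heavier two-variable computation.
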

\begin{proof}
Since $\mathfrak{A}_s$ is a Hopf $\mathop{\widehat\otimes}$-algebra by Lemma~\ref{CzAsHh}, it follows from
\cite[Proposition~3.7]{Pir_stbflat} that it suffices to check that $\mathbb{C}[z]\to \mathfrak{A}_s$ is a weak localization, i.e.,
the following conditions hold:

(1)~the natural map $\mathfrak{A}_s\ptens{\mathbb{C}[z]}\mathbb{C}\to \mathbb{C}$ is a topological isomorphism;

(2)~for some  projective resolution $0 \leftarrow \mathbb{C} \leftarrow P_\bullet$ in $\mathbb{C}[z]{\mbox{-}\!\mathop{\mathsf{mod}}}$ the complex $$0\leftarrow \mathfrak{A}_s\ptens{\mathbb{C}[z]}\mathbb{C} \leftarrow \mathfrak{A}_s\ptens{\mathbb{C}[z]} P_ \bullet $$ is admissible.

To check Condition~(1) recall that  for a $\mathop{\widehat\otimes}$-algebra $A$, a right Fr\'echet module $X$, and a closed ideal $I$ such that $A/I$ is also a Fr\'echet space the well-known formula $X\ptens{A}(A/I) \cong X/\,\overline{X\cdot I}$ holds; see, e.g.,  \cite[Proposition 3.1]{Pir_flcyc}. When $A=\mathbb{C}[z]$, $X=\mathfrak{A}_s$ and $I$ is the ideal of polynomials vanishing at~$0$, we have the desired isomorphism $\mathfrak{A}_s\ptens{\mathbb{C}[z]}\mathbb{C}\to \mathbb{C}$.

To check Condition~(2) note that
\begin{equation*}
0 \leftarrow \mathbb{C} \xleftarrow{\varepsilon} \mathbb{C}[z]\xleftarrow{d_0}\mathbb{C}[z]\leftarrow 0,
\end{equation*}
where $\varepsilon\!:z\mapsto 0$ and $[d_0(a)](z)\!:=az$, is a projective resolution in $\mathbb{C}[z]{\mbox{-}\!\mathop{\mathsf{mod}}}$. Applying $\mathfrak{A}_s\ptens{\mathbb{C}[z]}(-)$ to this resolution we get
\begin{equation}\label{Asres}
0 \leftarrow \mathbb{C} \longleftarrow \mathfrak{A}_s\longleftarrow\mathfrak{A}_s\leftarrow 0,
\end{equation}
where the differentials are defined by the same formulas.

Consider the map
$$
\si\!:\mathfrak{A}_s\to \mathfrak{A}_s\!:a\mapsto \sum_{n=0}^\infty a_{n+1}z^n.
$$
Suppose that $s<\infty$ and let $C$ be a positive constant such that $n^s\leqslant C\,2^n$ for every $n\in\mathbb{Z}_+$. Then for each $r>0$ we have
$$
\|\si(a)\|_{r,s}=\sum_{n=0}^\infty |a_{n+1}|\,\frac{r^n}{n!^s}\leqslant \sum_{n=0}^\infty \frac{n^s}{r}  \, |a_{n}|\,\frac{r^n}{n!^s}\leqslant  \frac{C}{r}\,  \|a\|_{2r,s}.
$$
Therefore $\si$ is well defined and continuous. It is easy to see that the sequence in~\eqref{Asres} splits by $\si$ and hence it is admissible. The case when $s=\infty$ is obvious.

Thus both condition are satisfied and therefore $\mathbb{C}[z]\to \mathfrak{A}_s$ is a weak localization for every~$s$.
\end{proof}

Recall that when an algebra $R$ is endowed with a derivation, the action induced by this derivation turns~$R$ into a $\mathbb{C}[z]$-module algebra and so one can consider the smash product $R{\mathop{\#}} \mathbb{C}[z]$.

\begin{thm}\label{Assmpr}
Let $K$ be a Hopf $\mathbin{\widehat{\otimes}}$-algebra, $R$ a $\mathbb{C}[z]$-module algebra of countable dimension and $S$ a $K$-$\mathbin{\widehat{\otimes}}$-module algebra. Suppose that $\be\!:\mathbb{C}[z]\to K$ is a Hopf $\mathbin{\widehat{\otimes}}$-algebra homomorphism and
$\al\!:R\to S$ is a $\mathbin{\widehat{\otimes}}$-algebra homomorphism that is a $\mathbb{C}[z]$-module morphism. If $\al$ and $\be$ are homological epimorphisms with dense range, then so is
$$
\al\mathop{\widehat{\#}}\be\!:R{\mathop{\#}} \mathbb{C}[z]\to S{\mathop{\widehat{\#}}} K.
$$
\end{thm}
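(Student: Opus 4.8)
The plan is to realize $\al\mathop{\widehat{\#}}\be$ as the algebra part of an $R$-$S$-homomorphism and to verify the four hypotheses of Theorem~\ref{PiTh}. Write $A\!:=R\mathop{\#}\mathbb{C}[z]$ and $B\!:=S\mathop{\widehat{\#}} K$, and take $\eta_A\!:=i_R$, $\eta_B\!:=i_S$ to be the canonical embeddings. Since $\be$ is a Hopf $\mathop{\widehat\otimes}$-algebra homomorphism, $\be(1)=1$, so formula~\eqref{morsmpr} gives $(\al\mathop{\widehat{\#}}\be)(r\otimes 1)=\al(r)\otimes 1$; that is, $(\al\mathop{\widehat{\#}}\be)\,i_R=i_S\,\al$, and $(\al\mathop{\widehat{\#}}\be,\al)$ is an $R$-$S$-homomorphism from $(A,R)$ to $(B,S)$. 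By Theorem~\ref{PiTh} it then suffices to check Conditions~(1)--(4) for this pair.

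Conditions~(1), (3), (4) are the routine ones. Condition~(1), that $g=\al$ is a homological epimorphism, is part of the hypothesis. For Condition~(3), the explicit construction~\eqref{exmusmpr} shows that $A=R\mathop{\widehat\otimes}\mathbb{C}[z]$ is free, hence projective, as a left $R$-module, and likewise $B=S\mathop{\widehat\otimes} K$ is free over $S$. For Condition~(4), observe that the $\mathbb{C}[z]$-module-algebra structure on $R$ is precisely the one induced by the derivation $\de$ given by the action of $z$, so by Remark~\ref{OeAsp} the algebra $A=R\mathop{\#}\mathbb{C}[z]$ is the Ore extension $R[x;\de]$. By Proposition~\ref{Oeqfft} every Ore extension is an $R$-$\mathop{\widehat\otimes}$-algebra of $(f_2)$-finite type and is relatively quasi-free over $R$: the first property is Condition~(4), and I record the second for the next step.

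The heart of the argument is Condition~(2): that $\al\mathop{\widehat{\#}}\be$ is a two-sided (in particular, left) relative homological epimorphism. I would deduce this from the UDE-based strengthening of Theorem~\ref{AMerelhoep} quoted as Theorem~\ref{Brelhoep}: since $A$ is relatively quasi-free over $R$ and $g=\al$ is an epimorphism (its range is dense), it is enough to show that $\al\mathop{\widehat{\#}}\be$ satisfies Property~(UDE). By Theorem~\ref{UDEinh}, and because $\al$ and $\be$ already have dense range, this in turn reduces to verifying Property~(UDE) for each of $\al$ and $\be$ separately. Here I use the general fact that a homological epimorphism $\varphi$ with dense range satisfies~(UDE): injectivity of $\widetilde\varphi_X$ for all $X$ is equivalent to $\varphi$ being an epimorphism, which dense range forces; and for surjectivity, a homological epimorphism preserves Hochschild cohomology \cite[Proposition~1.7]{T2}, so a continuous derivation $D\!:A\to X$ into a $B$-bimodule has a class which is the image of some $[\widetilde D]$ with $\widetilde D\in\Der(B,X)$, whence $\widetilde D\varphi-D$ is inner, of the form $a\mapsto \varphi(a)x-x\varphi(a)$, and $\widetilde D-\operatorname{ad}_x$ is then a continuous derivation of $B$ restricting to $D$. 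Applying this to $\al$ and to $\be$ finishes Condition~(2).

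With Conditions~(1)--(4) verified, Theorem~\ref{PiTh} gives that $\al\mathop{\widehat{\#}}\be$ is a homological epimorphism. I expect the main obstacle to be Condition~(2): one cannot cite Theorem~\ref{AMerelhoep} verbatim, because $\al\mathop{\widehat{\#}}\be$ is not an Arens--Michael envelope, so the genuine content is twofold --- first, the replacement of the Arens--Michael hypothesis by Property~(UDE) alone in Theorem~\ref{Brelhoep} (checking that Pirkovskii's proof of~\cite[Theorem~7.6(ii)]{Pir_qfree} uses nothing more), and second, the implication \emph{homological epimorphism with dense range $\Rightarrow$ Property~(UDE)}. The latter is where I would be most careful, since its surjectivity half rests on preservation of the \emph{first} Hochschild cohomology together with the identification, in the topological setting, of $1$-cocycles with continuous derivations and of $1$-coboundaries with genuinely inner derivations.
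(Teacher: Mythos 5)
Your proposal is correct and follows essentially the same route as the paper: both verify Conditions (1)--(4) of Theorem~\ref{PiTh} for the pair $(\al\mathop{\widehat{\#}}\be,\al)$, handle Condition (2) via Theorem~\ref{Brelhoep} combined with Theorem~\ref{UDEinh} and Proposition~\ref{Oeqfft}, and reduce everything to the fact that $\al$ and $\be$ satisfy Property (UDE). The paper isolates the implication ``homological epimorphism $\Rightarrow$ Property (UDE)'' as a separate statement (Theorem~\ref{homeUDE}), proved via the bar resolution and the identification $B\ptens{A}\Omega^1 A\ptens{A}B\cong\Omega^1 B$, which is in substance the same degree-one Hochschild cohomology argument you sketch.
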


To prove Theorem~\ref{Assmpr} we establish a generalization of Theorem~\ref{AMerelhoep} and next apply  Theorem~\ref{PiTh}.

Recall that an $R$-$\mathop{\widehat\otimes}$-algebra $A$ is said to be \emph{relatively quasi-free} over $R$ if any admissible singular $R$-extension of $A$ is split; see \cite[Definition 7.1]{Pir_qfree} and the discussion therein.

\begin{thm}\label{Brelhoep}
Let $(f,g)$ be an $R$-$S$-homomorphism from an $R$-$\mathop{\widehat\otimes}$-algebra $A$ to a $S$-$\mathop{\widehat\otimes}$-algebra $B$. Suppose that

\emph{(1)}~$g\!: R \to S$ has dense range;

\emph{(2)}~$f\!: A \to B$ satisfies Property (UDE) and  has dense range;

\emph{(3)}~$A$ is relatively quasi-free over $R$.

Then~$f$ is a two-sided relative homological epimorphism.
\end{thm}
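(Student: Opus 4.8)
The plan is to verify Definition~\ref{twsrelhoep} directly. First, Property~(UDE) makes $\widetilde f_X$ injective for every $X$, so $f$ is an epimorphism (as recalled in the proof of Proposition~\ref{DDPsta}); the dense range of $f$ gives the same conclusion, and an epimorphism is characterized by the fact that multiplication induces a topological isomorphism $B\ptens{A}B\cong B$. It then remains to produce one projective resolution of $A$ in $(A,R){\mbox{-}\!\mathop{\mathsf{mod}}\!\mbox{-}}(A,R)$ whose image under the functor $F:=B\ptens{A}(-)\ptens{A}B$ splits in $S{\mbox{-}\!\mathop{\mathsf{mod}}\!\mbox{-}} S$. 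I would use the fundamental differential sequence. Let $\Omega:=\ker(m_A\colon A\ptens{R}A\to A)$ be the bimodule of relative differentials, with universal $R$-derivation $d_A\colon A\to\Omega$, $a\mapsto a\otimes1-1\otimes a$, and consider
\begin{equation*}
\mathcal{E}_A\colon\quad 0\to\Omega\xrightarrow{\ j_A\ }A\ptens{R}A\xrightarrow{\ m_A\ }A\to0,
\end{equation*}
which is admissible in $(A,R){\mbox{-}\!\mathop{\mathsf{mod}}\!\mbox{-}}(A,R)$. Since $A$ is relatively quasi-free over $R$, the bimodule $\Omega$ is projective there (the standard description of relatively quasi-free algebras in~\cite{Pir_qfree}), so $\mathcal{E}_A$ is a projective resolution of~$A$.

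Next I would apply $F$ and identify the terms. Because $f$ is an epimorphism, $F(A)\cong B$; because $g$ has dense range, the module relations over $R$ and over $S$ generate the same closed subspace, so $F(A\ptens{R}A)\cong B\ptens{R}B\cong B\ptens{S}B$, and under these identifications $F(m_A)$ becomes the multiplication $m_B\colon B\ptens{S}B\to B$. Thus $F(\mathcal{E}_A)$ reads
\begin{equation*}
0\to F(\Omega)\xrightarrow{\ \phi\ }B\ptens{S}B\xrightarrow{\ m_B\ }B\to0,
\end{equation*}
where $\phi:=F(j_A)$ lands in $\Omega_{B/S}:=\ker m_B$ since $m_B\phi=F(m_Aj_A)=0$. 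The map $m_B$ carries the canonical $S$-bimodule section $\sigma\colon b\mapsto b\otimes1$ (it is $S$-bilinear by the defining relation of $\ptens{S}$), which yields the $S$-bimodule retraction $p:=1-\sigma m_B$ of $B\ptens{S}B$ onto $\Omega_{B/S}$. Consequently, once $\phi\colon F(\Omega)\to\Omega_{B/S}$ is shown to be an isomorphism of $S$-$\mathop{\widehat\otimes}$-bimodules, the maps $\sigma$ and $\phi^{-1}p$ assemble into a contracting homotopy of $F(\mathcal{E}_A)$ in $S{\mbox{-}\!\mathop{\mathsf{mod}}\!\mbox{-}} S$; this is precisely the acyclicity required by Definition~\ref{twsrelhoep}, and combined with the epimorphism property it gives the theorem.

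The hard part — and the only place where the two remaining hypotheses genuinely enter — is to prove that $\phi$ is an isomorphism. I would argue by Yoneda, examining the map induced by $\phi$ on $\mathop{\mathrm{Hom}}$ into an arbitrary $B$-$\mathop{\widehat\otimes}$-bimodule $X$. The universal property of $\Omega_{B/S}$ gives ${}_B\mathop{\mathrm{Hom}}_B(\Omega_{B/S},X)\cong\Der_S(B,X)$ (composition with $d_B$), while the induction--restriction adjunction for $f$ together with the universal property of $\Omega$ gives ${}_B\mathop{\mathrm{Hom}}_B(F(\Omega),X)\cong{}_A\mathop{\mathrm{Hom}}_A(\Omega,X)\cong\Der_R(A,X)$. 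Under these identifications the transpose $\phi^{\ast}$ becomes the restriction $D\mapsto Df$. Now $\Der_S(B,X)$ and $\Der_R(A,X)$ are exactly the subspaces of $\Der(B,X)$ and $\Der(A,X)$ of derivations annihilating $\eta_B$ and $\eta_A$, and Property~(UDE) asserts that $\widetilde f_X\colon\Der(B,X)\to\Der(A,X)$ is bijective. Injectivity of $D\mapsto Df$ on the relative subspaces is inherited at once; for surjectivity, given $D'\in\Der_R(A,X)$ let $D\in\Der(B,X)$ be the unique derivation with $Df=D'$, and observe that $(D\eta_B)\,g=D(\eta_B g)=D(f\eta_A)=(Df)\eta_A=D'\eta_A=0$, so the continuous linear map $D\eta_B$ vanishes on the dense range of $g$ and hence $D\eta_B=0$, i.e.\ $D\in\Der_S(B,X)$. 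Therefore $\phi^{\ast}$ is bijective for every $X$, and by the Yoneda lemma $\phi$ is an isomorphism of $B$-$\mathop{\widehat\otimes}$-bimodules, a fortiori of $S$-$\mathop{\widehat\otimes}$-bimodules. The decisive step is thus this transfer from the absolute bijection furnished by~(UDE) to the relative one, for which the density of the range of~$g$ is indispensable; everything else (the term identifications, the canonical splitting of $m_B$, and the assembly of the homotopy) is routine.
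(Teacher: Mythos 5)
Your proposal is correct and follows essentially the same route as the paper: the relative differential resolution $0\leftarrow A\leftarrow A\ptens{R}A\leftarrow\Omega^1_RA\leftarrow 0$ (projective by relative quasi-freeness), identification of its image under $B\ptens{A}(-)\ptens{A}B$ with the corresponding sequence for $B$ over $S$, and the key step of upgrading Property (UDE) to a bijection $\Der_S(B,X)\to\Der_R(A,X)$ via density of the range of $g$, then concluding $B\ptens{A}\Omega^1_RA\ptens{A}B\cong\Omega^1_SB$ by a Yoneda-type argument (the paper's Propositions~\ref{GUDEA} and~\ref{GUDEB}). The only cosmetic differences are that you exhibit the contracting homotopy $b\mapsto b\otimes 1$ explicitly where the paper cites Pirkovskii, and your surjectivity argument shows $D\eta_B=0$ directly rather than verifying the $S$-bilinearity identity on a dense subset.
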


For the proof we need a relative version of Property (UDE). If $A$ is an $R$-$\mathop{\widehat\otimes}$-algebra and $X$ is an $A$-$\mathop{\widehat\otimes}$-bimodule, then a derivation $D\!: A \to X$ is called an \emph{$R$-derivation} if it is an  $R$-bimodule morphism  (or, equivalently, a left or right $R$-module morphism). The subspace of $\Der(A, X)$ consisting of
$R$-derivations is denoted by $\Der_R(A, X)$.
An $R$-$S$-homomorphism $(f,g)\!: (A,R)\to (B,S)$ induces the linear map
$$
\Der_S(B,X)\to \Der_R(A,X)\!:D\mapsto Df
$$
for every $B$-$\mathop{\widehat\otimes}$-bimodule $X$;
see details in~\cite{Pir_qfree}.

\begin{pr}\label{GUDEA}
Let $(f,g)\!: (A,R)\to (B,S)$ be an $R$-$S$-homomorphism from an $R$-$\mathop{\widehat\otimes}$-algebra $A$ to an $S$-$\mathop{\widehat\otimes}$-algebra $B$. Suppose that each of $f$ and $g$ has dense range and $f$ satisfies Property (UDE). Then the canonical map $\Der_S(B,X)\to \Der_R(A,X)$ is a bijection
for every $B$-$\mathop{\widehat\otimes}$-bimodule~$X$.
\end{pr}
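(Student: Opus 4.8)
The plan is to realize the relative derivation spaces as kernels inside the absolute ones and then restrict the bijection supplied by Property (UDE). The key preliminary observation is a clean description of relative derivations: for $D\in\Der(A,X)$ one has $D\in\Der_R(A,X)$ if and only if $D\eta_A=0$. Indeed, since $D$ is a derivation, $D(\eta_A(r)a)=\eta_A(r)\cdot D(a)+D(\eta_A(r))\cdot a$, so $D$ is a left $R$-module morphism precisely when $D(\eta_A(r))\cdot a=0$ for all $r\in R$, $a\in A$; putting $a=1$ (all algebras are unital and $D(1)=0$) this is equivalent to $D\eta_A=0$, and conversely $D\eta_A=0$ makes $D$ an $R$-bimodule morphism. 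The same description holds for $\Der_S(B,X)$ with $\eta_B$ in place of $\eta_A$. Thus $\Der_R(A,X)=\{D\in\Der(A,X): D\eta_A=0\}$ and $\Der_S(B,X)=\{D\in\Der(B,X): D\eta_B=0\}$.

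With this in hand I would first check that the canonical map is well defined, i.e.\ that $Df\in\Der_R(A,X)$ whenever $D\in\Der_S(B,X)$. Using the commutativity of the $R$-$S$-homomorphism square, $f\eta_A=\eta_B g$, one gets $(Df)\eta_A=D(\eta_B g)=(D\eta_B)g=0$, as required. Injectivity is then immediate: the canonical map is the restriction to $\Der_S(B,X)$ of $\widetilde f_X\colon\Der(B,X)\to\Der(A,X)$, which is bijective by Property (UDE), so its restriction is injective.

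The main point, and the only place where the density hypothesis enters, is surjectivity. Given $D_0\in\Der_R(A,X)$, Property (UDE) provides a unique $D\in\Der(B,X)$ with $Df=D_0$; it remains to verify $D\in\Der_S(B,X)$, i.e.\ that $D\eta_B=0$. From $D_0\eta_A=0$ together with $f\eta_A=\eta_B g$ we obtain $(D\eta_B)g=D(\eta_B g)=D(f\eta_A)=(Df)\eta_A=D_0\eta_A=0$, so the continuous linear map $D\eta_B\colon S\to X$ vanishes on the range of $g$. Since $g$ has dense range and $D\eta_B$ is continuous, it vanishes on all of $S$, whence $D\eta_B=0$. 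Thus $D$ is an $S$-derivation lifting $D_0$, which establishes surjectivity.

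I expect this last step --- transporting the vanishing condition across $f$ and propagating it by continuity along the dense image of $g$ --- to be the only subtle point; everything else is formal once the kernel description of relative derivations is in place. It is worth noting that, in this argument, only the density of $g$ is actually used (through the closure step), while the density of $f$ plays no role here and is presumably retained among the hypotheses for uniformity with the setting in which the proposition is applied.
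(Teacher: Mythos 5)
Your proof is correct and follows the same overall strategy as the paper's: injectivity is obtained by restricting the injective map $\widetilde f_X$, and surjectivity by lifting an $R$-derivation through Property (UDE) and then using density to show that the lift is an $S$-derivation. The one genuine difference lies in how that last verification is organized. The paper checks the identity $D'(\eta_B(s)b)=\eta_B(s)\cdot D'(b)$ directly on the dense set of pairs $s=g(r)$, $b=f(a)$, and therefore invokes the density of the ranges of both $f$ and $g$. You instead use the kernel description $\Der_S(B,X)=\{D\in\Der(B,X): D\eta_B=0\}$ (valid because all modules here are unital, so $D(1)=0$ and the left, right and bimodule morphism conditions coincide, as the paper itself notes), which reduces the verification to the vanishing of the continuous linear map $D\eta_B$ on the dense range of $g$ alone. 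This is a mild but real sharpening: your argument shows that the density of the range of $f$ is not actually used in this proposition, only Property (UDE) for $f$ and the density of the range of $g$; this is in the spirit of the paper's own remark following the proposition that the hypotheses can be weakened (there, by replacing density of $g$ with the assumption that $g$ is an epimorphism). All the individual steps --- the characterization of relative derivations, the well-definedness computation $(Df)\eta_A=(D\eta_B)g=0$, and the closure argument in a Hausdorff target --- are sound.
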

\begin{proof}
Since $\Der(B,X)\to \Der(A,X)$ is  injective, so is $\Der_S(B,X)\to \Der_R(A,X)$.

To prove the surjectivity take $D\in \Der_R(A,X)$. Since $f$ satisfies Property (UDE), there is $D'\in \Der(B,X)$ such that $D=D'f$. To complete the proof we need to show that $D'$ is an $S$-derivation, that is,
\begin{equation}\label{Sder}
D'(\eta_B(s)b)=\eta_B(s)\cdot D'(b)\qquad\text{for every $s\in S$ and $b\in B$}.
\end{equation}

Since each of $f$ and $g$ has dense range, it suffices to verify~\eqref{Sder}  in the case when $s=g(r)$ and $b=f(a)$ for some $r\in R$ and $a\in A$. Note that
$$\eta_B(s)b=\eta_B(g(r))f(a)=f(\eta_A(r))f(a)=f(\eta_A(r)a)$$ and hence $D'(\eta_B(s)b)=D(\eta_A(r)a)$. Since $D$ is an $R$-derivation, we have $D'(\eta_B(s)b)=r\cdot D(a)$. On the other hand,
$$
r\cdot D(a)=f(\eta_A(r))\cdot D(a)=\eta_B(g(r))\cdot D'f(a)=\eta_B(s)\cdot D'(b).
$$
Thus \eqref{Sder} holds.
\end{proof}
\begin{rem}
The conditions of Proposition~\ref{GUDEA} can be weakened by assuming that $g$ is an epimorphism of $\mathop{\widehat\otimes}$-algebras;
cf.~\cite[Proposition~3.8]{Pir_qfree}. But we do not need this variant.
\end{rem}

If $A$ is an $R$-$\mathop{\widehat\otimes}$-algebra, then there is a representing object of the functor $\Der_R(A,-)$ from $A{\mbox{-}\!\mathop{\mathsf{mod}}\!\mbox{-}} A$ to the category of sets. It is denoted by $\Omega^1_R A$ and called
the \emph{bimodule of relative differential $1$-forms} of $A$; for details see \cite[p.\,82]{Pir_qfree}.

Fix an $R$-$S$-homomorphism and a $B$-$\mathop{\widehat\otimes}$-bimodule~$X$.
From the universal property of $\Omega^1$ we have the commutative diagram
\begin{equation}\label{derprdi}
\xymatrix@C=20pt{
 \Der_S(B,X) \ar[rr]\ar@{=}[d] && \Der_R(A,X) \ar@{=}[d]  \\
 {_B}{\mathbf h}{_B}(\Omega_S^1 B,X)\ar[rr]
 &&{_A}{\mathbf h}{_A}(\Omega_R^1 A,X),
 }
\end{equation}
where ${\mathbf h}{_B}(-,-)$ denotes the vector space of $B$-$\mathop{\widehat\otimes}$-bimodule morphisms (and similarly for $A$); see \cite[eq. (7.3) and p.\,83]{Pir_qfree}. Also, consider the natural map
\begin{equation}\label{hORaBBAB}
{_A}{\mathbf h}{_A}( \Omega_R^1 A,X)\to {_B}{\mathbf h}{_B}(B\ptens{A} \Omega^1_R A\ptens{A}B,X)
\end{equation}
and the composition with the bottom arrow,
\begin{equation}\label{bhbm}
{_B}{\mathbf h}{_B}( \Omega_S^1 B,X)\to {_B}{\mathbf h}{_B}(B\ptens{A} \Omega^1_R A\ptens{A}B,X).
\end{equation}
Substituting $\Omega_S^1 B$ for $X$, we obtain a $B$-$\mathop{\widehat\otimes}$-bimodule morphism
\begin{equation}\label{oAoB}
B\ptens{A} \Omega^1_R A\ptens{A}B\to \Omega_S^1 B
\end{equation}
corresponding to the identity morphism
of $\Omega_S^1 B$.

\begin{pr}\label{GUDEB}
\emph{(cf.~\cite[Proposition~7.4]{Pir_qfree})}
Let $(f,g)\!: (A,R)\to (B,S)$ be an $R$-$S$-homomorphism from an $R$-$\mathop{\widehat\otimes}$-algebra $A$ to an $S$-$\mathop{\widehat\otimes}$-algebra $B$. Then the canonical map $\Der_S(B,X)\to \Der_R(A,X)$ is a bijection
for every $B$-$\mathop{\widehat\otimes}$-bimodule~$X$ if and only if the morphism  in~\eqref{oAoB} is a $B$-$\mathop{\widehat\otimes}$-bimodule isomorphism.
\end{pr}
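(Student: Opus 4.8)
The plan is to deduce the equivalence from the Yoneda lemma, combining the representability of the relative derivation functors with the induction--restriction adjunction for $\mathop{\widehat\otimes}$-bimodules. First I would record the vertical identifications in diagram~\eqref{derprdi}: by the very definition of $\Omega_R^1 A$ and $\Omega_S^1 B$ as representing objects of $\Der_R(A,-)$ and $\Der_S(B,-)$, there are natural isomorphisms $\Der_R(A,X)\cong {_A}{\mathbf h}{_A}(\Omega_R^1 A,X)$ and $\Der_S(B,X)\cong {_B}{\mathbf h}{_B}(\Omega_S^1 B,X)$. Since~\eqref{derprdi} commutes, the canonical map $\Der_S(B,X)\to\Der_R(A,X)$ is a bijection for every $B$-$\mathop{\widehat\otimes}$-bimodule $X$ if and only if the lower horizontal arrow ${_B}{\mathbf h}{_B}(\Omega_S^1 B,X)\to {_A}{\mathbf h}{_A}(\Omega_R^1 A,X)$ is a bijection for every such~$X$.

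Next I would show that the natural map in~\eqref{hORaBBAB} is an isomorphism for every~$X$. This is the adjunction expressing that the functor $B\ptens{A}(-)\ptens{A}B\!:A{\mbox{-}\!\mathop{\mathsf{mod}}\!\mbox{-}}A\to B{\mbox{-}\!\mathop{\mathsf{mod}}\!\mbox{-}}B$ is left adjoint to restriction of scalars along $f$; concretely, it follows from the universal property of the projective module tensor product $\ptens{A}$ applied on both the left and the right. Granting this, the composite~\eqref{bhbm} differs from the lower arrow of~\eqref{derprdi} only by the isomorphism~\eqref{hORaBBAB}, so the lower arrow is a bijection for every $X$ if and only if~\eqref{bhbm} is.

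Finally I would invoke the Yoneda lemma. The map~\eqref{bhbm} is a natural transformation between the functors $X\mapsto{_B}{\mathbf h}{_B}(\Omega_S^1 B,X)$ and $X\mapsto{_B}{\mathbf h}{_B}(B\ptens{A}\Omega_R^1 A\ptens{A}B,X)$ on $B{\mbox{-}\!\mathop{\mathsf{mod}}\!\mbox{-}}B$, represented by $\Omega_S^1 B$ and by $B\ptens{A}\Omega_R^1 A\ptens{A}B$ respectively. By construction the morphism~\eqref{oAoB} is obtained by substituting $X=\Omega_S^1 B$ in~\eqref{bhbm} and evaluating at the identity morphism of $\Omega_S^1 B$; hence~\eqref{bhbm} coincides with precomposition by~\eqref{oAoB}. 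The Yoneda lemma then yields that~\eqref{bhbm} is a bijection for every $X$ precisely when~\eqref{oAoB} is a $B$-$\mathop{\widehat\otimes}$-bimodule isomorphism, and chaining the three reductions completes both implications.

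The step I expect to demand the most care is the verification that~\eqref{hORaBBAB} is an isomorphism: in the purely algebraic setting this is the routine induction--restriction adjunction, but here one must ensure that the underlying tensor--hom adjunction survives passage to completed projective tensor products for the bimodules involved. Once this is in place, the remainder is the formal combination of representability and the Yoneda lemma.
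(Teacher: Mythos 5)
Your proposal is correct and follows essentially the same route as the paper's proof: the paper likewise reduces the statement to the bijectivity of~\eqref{bhbm} via the Yoneda-type identification with~\eqref{oAoB}, uses that~\eqref{hORaBBAB} is always bijective, and transfers the question to the bottom (hence top) arrow of~\eqref{derprdi}. Your version merely spells out the representability and adjunction steps in more detail than the paper does.
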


\begin{proof}
Note that~\eqref{oAoB} is an isomorphism if and only if~\eqref{bhbm} is bijective
for every $B$-$\mathop{\widehat\otimes}$-bimodule~$X$. Since~\eqref{hORaBBAB} is always bijective, \eqref{bhbm} is bijective if and only if the bottom arrow in~\eqref{derprdi} is bijective.
This  is clearly equivalent to the bijectivity of the top arrow in\eqref{derprdi}.
\end{proof}

\begin{proof}[Proof of Theorem~\ref{Brelhoep}]
By \cite[Proposition 7.3]{Pir_qfree}, $A$ is relatively quasi-free over $R$ if and only if $\Omega^1_RA$ is projective in  $(A, R){\mbox{-}\!\mathop{\mathsf{mod}}\!\mbox{-}}(A, R)$. Hence the sequence
 \begin{equation}
\label{Omega}
0 \leftarrow A  \xleftarrow{\mu_A} A\ptens{R} A \xleftarrow{j} \Omega^1_R A \leftarrow 0
\end{equation}
is a projective resolution in $(A, R){\mbox{-}\!\mathop{\mathsf{mod}}\!\mbox{-}}(A, R)$. Here $\mu_A$ is the multiplication on $A$ and $j$ is the bimodule morphism corresponding to the inner derivation 
$$
-{\mathop{\mathrm{ad}}\nolimits}_{1\otimes 1}\!: A \to A\ptens{R}A\!: a \mapsto  1\otimes a-a\otimes 1.
$$

Since  $f$ has dense range, it is an epimorphism. Then the canonical map $B\ptens{A} B\to B$ is a topological isomorphism. Also, since $g$  is an epimorphism, the canonical map $B\ptens{R} B\to B\ptens{S} B $  is a topological isomorphism; for both properties of  epimorphisms see \cite[Proposition 6.1]{Pir_qfree}. 

We claim that the application of the functor $B\ptens{A}(-)\ptens{A}B$ to the sequence in~\eqref{Omega}  produces the complex
\begin{equation}\label{OmBsh}
0 \leftarrow B  \leftarrow  B\ptens{S} B \leftarrow \Omega^1_S B \leftarrow 0.
\end{equation}
Indeed, it suffices to show that $B\ptens{A} \Omega^1_R A\ptens{A}B\cong \Omega_S^1 B$.
It follows from Proposition~\ref{GUDEA} that the top arrow in~\eqref{derprdi}  is bijective for every $X$ and so by Proposition~\ref{GUDEB}, we have an isomorphism.

Finally, \eqref{OmBsh} splits in $B{\mbox{-}\!\mathop{\mathsf{mod}}\!\mbox{-}} S$ by \cite[Proposition 7.2]{Pir_qfree}. Hence it also splits in $S{\mbox{-}\!\mathop{\mathsf{mod}}\!\mbox{-}} S$. This implies that $f$ is a two-sided relative homological epimorphism; see Definition~\ref{twsrelhoep}.
\end{proof}

Following \cite[Definition 8.3]{Pir_qfree}, we say that a projective $(A, R)$-$\mathop{\widehat\otimes}$-bimodule $P$ satisfies the \emph{finiteness condition $(f_2)$} if it is a retract of a bimodule of the form $A\ptens{R}R^n\ptens{R} A$ for some $n\in\mathbb{N}$. (We do not need the alternative condition $(f_1)$ from \cite{Pir_qfree}.) Also, an $R$-$\mathop{\widehat\otimes}$-algebra $A$ is said to be of \emph{$(f_2)$-finite type} if in $(A, R){\mbox{-}\!\mathop{\mathsf{mod}}\!\mbox{-}}(A, R)$ it has a  projective finite-length resolution all of whose bimodules satisfy the finiteness condition $(f_2)$ \cite[Definition 8.4]{Pir_qfree}.

\begin{pr}
\label{Oeqfft}
Let $R$ be an associative algebra of countable dimension and~$\de$ a derivation of~$R$. Denote the Ore extension $R[z; \de]$ by~$A$. Then the following conditions hold.

\emph{(A)}~$A$ is relatively quasi-free over $R$.

\emph{(B)}~$A$ is an $R$-$\mathop{\widehat\otimes}$-algebra of $(f_2)$-finite type.
\end{pr}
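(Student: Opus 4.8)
The plan is to compute the bimodule $\Omega^1_R A$ of relative differential $1$-forms explicitly and to show that it is the \emph{free} $(A,R)$-$\mathop{\widehat\otimes}$-bimodule of rank one, generated by~$dz$; both assertions follow at once. Since $R$, and hence $A=R[z;\de]$, carries the strongest locally convex topology, the functor $\ptens{R}$ reduces to the algebraic relative tensor product, every space in sight is complete, and no closures are needed. In particular $\Omega^1_R A$ may be computed algebraically as $\Ker(\mu_A\colon A\ptens{R}A\to A)$ inside the free $(A,R)$-bimodule $A\ptens{R}A$, exactly as in the sequence~\eqref{Omega}.

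First I would introduce the $(A,R)$-bimodule map $\psi\colon A\ptens{R}A\to A\ptens{R}A$ determined by $\psi(a\otimes b)=az\otimes b-a\otimes zb$. Using the Ore relation $zr=rz+\de(r)$ one checks directly that $\psi(ar\otimes b)=\psi(a\otimes rb)$, so that $\psi$ is well defined over~$R$; plainly $\mu_A\psi=0$. The heart of the argument is the exactness of
\[
0\to A\ptens{R}A\xrightarrow{\ \psi\ }A\ptens{R}A\xrightarrow{\ \mu_A\ }A\to 0 .
\]
Because $A$ is free as a left and as a right $R$-module on the powers $z^n$, this sequence splits in $R{\mbox{-}\!\mathop{\mathsf{mod}}\!\mbox{-}}R$ (for instance $a\mapsto a\otimes 1$ splits $\mu_A$ on the left), so it is admissible. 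Comparison with the defining sequence~\eqref{Omega} then identifies $\Omega^1_R A\cong A\ptens{R}A$ via~$\psi$, the free generator $1\otimes 1$ being carried to $z\otimes 1-1\otimes z$, which represents $d(z)$ in $\Ker\mu_A=\Omega^1_RA$. Hence $\Omega^1_R A$ is free of rank one over the bimodule category.

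The main obstacle is precisely this exactness, i.e.\ the injectivity of $\psi$ together with $\operatorname{Im}\psi=\Ker\mu_A$. The cleanest route I foresee is to filter $A\ptens{R}A$ by the $z$-degree of the second tensor factor: both $\psi$ and $\mu_A$ are filtered, and on the associated graded the twisting term $\de$ disappears, so one recovers the Koszul resolution of the \emph{central} polynomial extension $R[z]$ (there $z\otimes 1-1\otimes z$ is central and regular), whose exactness is standard. Exhaustiveness and Hausdorffness of the filtration then lift exactness back to~$\psi$.

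With freeness in hand both parts are immediate. Being free, $\Omega^1_R A$ is projective in $(A,R){\mbox{-}\!\mathop{\mathsf{mod}}\!\mbox{-}}(A,R)$, so by \cite[Proposition~7.3]{Pir_qfree} the algebra $A$ is relatively quasi-free over~$R$, which is~(A). Consequently~\eqref{Omega} is a projective resolution of~$A$ in $(A,R){\mbox{-}\!\mathop{\mathsf{mod}}\!\mbox{-}}(A,R)$ of length one, and its two projective terms, $A\ptens{R}A$ and $\Omega^1_R A\cong A\ptens{R}A$, are both isomorphic to $A\ptens{R}R^1\ptens{R}A=A\ptens{R}A$ and therefore satisfy the finiteness condition~$(f_2)$. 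Thus $A$ is an $R$-$\mathop{\widehat\otimes}$-algebra of $(f_2)$-finite type, which is~(B).
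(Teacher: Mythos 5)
Your proposal is correct and follows essentially the same route as the paper: everything hinges on the identification $\Omega^1_R A\cong A\ptens{R}A$ as a free $(A,R)$-$\mathop{\widehat\otimes}$-bimodule of rank one, from which (A) follows via \cite[Proposition~7.3]{Pir_qfree} and (B) via the length-one resolution~\eqref{Omega}. The only difference is that the paper simply cites Pirkovskii's Propositions~7.8 and~7.9 for this identification, whereas you reprove it by exhibiting the exact admissible sequence built from $\psi(a\otimes b)=az\otimes b-a\otimes zb$ (your verification, including the filtration argument for exactness and the reduction to the algebraic tensor product in the countable-dimensional case, is sound).
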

\begin{proof}
Part~(A) is a partial case of Proposition~7.9 in \cite{Pir_qfree}. Moreover, since we deal with an Ore extension with trivial twisting (i.e., the twisting endomorphism is identical), this proposition implies
that $\Omega^1_R A \cong A\mathbin{\otimes_R} A$ as an $(A, R)$-bimodule; see also \cite[Proposition~7.8]{Pir_qfree}. In particular, $\Omega^1_R A$ satisfies the finiteness condition $(f_2)$. Since $A$ is relatively quasi-free by Part~(A), it follows from \cite[Proposition~7.3.]{Pir_qfree} that $\Omega^1_R A$ is a projective $(A, R)$-bimodule. Then the sequence in~\eqref{Omega} is a projective resolution and thus $A$ is of $(f_2)$-finite type; cf. \cite[Example~8.1]{Pir_qfree}. This completes the proof of Part~(B).
\end{proof}

The following result gives a direct relation between homological epimorphisms and Property (UDE).

\begin{thm}\label{homeUDE}
Every homological epimorphism of $\mathop{\widehat\otimes}$-algebras satisfies Property (UDE).
\end{thm}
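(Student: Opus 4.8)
The plan is to prove that a homological epimorphism $f\colon A\to B$ satisfies Property (UDE) by exploiting the characterization of derivations as bimodule morphisms out of the differential forms bimodule, together with the homological data that a homological epimorphism provides. First I would recall the absolute ($R=S=\mathbb{C}$) specialization of the machinery already built: for any $\mathop{\widehat\otimes}$-algebra $A$, derivations into a $B$-bimodule $X$ are represented by $\Omega^1 A$, and the comparison diagram in~\eqref{derprdi} together with the natural morphism~\eqref{oAoB}, namely $B\ptens{A}\Omega^1 A\ptens{A}B\to \Omega^1 B$, controls whether $\widetilde f_X\colon\Der(B,X)\to\Der(A,X)$ is bijective. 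By Proposition~\ref{GUDEB} (in the case $R=S=\mathbb{C}$), Property (UDE) for $f$ is \emph{equivalent} to this canonical morphism being a $B$-$\mathop{\widehat\otimes}$-bimodule isomorphism. So the whole statement reduces to producing that isomorphism from the hypothesis that $f$ is a homological epimorphism.

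The key step is to feed the standard projective bimodule resolution of $A$ into the tensoring functor~\eqref{tenprfu} and read off what a homological epimorphism forces. Consider the admissible sequence
\begin{equation*}
0 \leftarrow A \xleftarrow{\mu_A} A\ptens{\mathbb{C}}A \xleftarrow{j} \Omega^1 A \leftarrow 0,
\end{equation*}
which upon applying $B\ptens{A}(-)\ptens{A}B$ yields a complex whose outer terms compute to $B$ and to $B\ptens{A}\Omega^1 A\ptens{A}B$, with middle term $B\ptens{A}(A\ptens{\mathbb{C}}A)\ptens{A}B\cong B\ptens{\mathbb{C}}B$. Since $f$ is an epimorphism, $B\ptens{A}B\cong B$, so the truncated bar-type sequence
\begin{equation*}
0 \leftarrow B \leftarrow B\ptens{\mathbb{C}}B \leftarrow B\ptens{A}\Omega^1 A\ptens{A}B \leftarrow 0
\end{equation*}
is precisely the image of the above resolution under the tensoring functor. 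The defining acyclicity of a homological epimorphism says this image sequence is admissible (splits over $\mathbb{C}$); in particular its homology vanishes and the kernel of $B\ptens{\mathbb{C}}B\to B$ is identified with $B\ptens{A}\Omega^1 A\ptens{A}B$. But that kernel is exactly $\Omega^1 B$, so we obtain the desired isomorphism $B\ptens{A}\Omega^1 A\ptens{A}B\cong\Omega^1 B$, which is the morphism~\eqref{oAoB}.

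From there I would invoke Proposition~\ref{GUDEB} in reverse to conclude that $\Der(B,X)\to\Der(A,X)$ is bijective for every $B$-$\mathop{\widehat\otimes}$-bimodule $X$, which is Property (UDE) by Definition~\ref{UDEdef}. The main obstacle I anticipate is verifying carefully that the canonical comparison morphism of~\eqref{oAoB} is the \emph{same} map that the homological epimorphism hypothesis certifies to be an isomorphism, rather than merely an abstract isomorphism existing between the two bimodules; that is, one must check naturality so that the identification respects the bimodule structures and the universal property of $\Omega^1$, exactly as set up in the diagram~\eqref{derprdi}. A secondary technical point is the completed tensor product bookkeeping: one should confirm that $B\ptens{A}(A\ptens{\mathbb{C}}A)\ptens{A}B\cong B\ptens{\mathbb{C}}B$ and that the splitting is genuinely a $\mathbb{C}$-splitting in the sense required, but these are routine once the epimorphism identities $B\ptens{A}B\cong B$ are in hand. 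Everything else is formal, so I expect the proof to be short, with the content concentrated in correctly matching the homological-epimorphism complex to the differential-forms characterization of Property (UDE).
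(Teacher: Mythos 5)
Your overall strategy coincides with the paper's: reduce Property (UDE) to showing that the canonical morphism $B\ptens{A}\Omega^1 A\ptens{A}B\to\Omega^1 B$ of~\eqref{oAoB} is a bimodule isomorphism, and then invoke Proposition~\ref{GUDEB} with $R=S=\mathbb{C}$. But there is a genuine gap at the key step. You apply the defining acyclicity of a homological epimorphism to the sequence $0 \leftarrow A \leftarrow A\mathop{\widehat\otimes} A \leftarrow \Omega^1 A \leftarrow 0$. Definition~\ref{Homoepi} only guarantees that the functor $B\ptens{A}(-)\ptens{A}B$ sends \emph{projective} resolutions of $A$ in $A{\mbox{-}\!\mathop{\mathsf{mod}}\!\mbox{-}} A$ to admissible sequences. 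Your sequence is admissible and its middle term $A\mathop{\widehat\otimes} A$ is projective, but $\Omega^1 A$ is projective as an $A$-$\mathop{\widehat\otimes}$-bimodule only when $A$ is quasi-free, which is not assumed here. So the sequence is not a projective resolution in general, the definition does not apply to it, and the claim that its image under the tensoring functor is admissible is unjustified. Even exactness of the image in the middle and the injectivity of $B\ptens{A}\Omega^1 A\ptens{A}B\to B\mathop{\widehat\otimes} B$ (needed to identify the former with the kernel $\Omega^1 B$) are exactly the kind of statements that can fail when the functor is applied to a non-projective term.

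The paper circumvents this by using the full unnormalized bar resolution~\eqref{barA}, all of whose terms $A^{\mathop{\widehat\otimes} n}$ are free, hence projective. Admissibility of its image then \emph{is} guaranteed by the hypothesis, giving a topological isomorphism $\Coker \widetilde d_2\to\Ker\mu_B\cong\Omega^1 B$. The bimodule $\Omega^1 A$ enters only as $\Coker d_2$, and since the relative tensor product functor preserves cokernels, $B\ptens{A}\Omega^1 A\ptens{A}B\cong\Coker\widetilde d_2$; this is how one obtains the isomorphism~\eqref{oAoB} without ever needing the short differential-forms sequence to be a projective resolution. To repair your argument, replace the two-step sequence by the bar resolution and route the identification of $B\ptens{A}\Omega^1 A\ptens{A}B$ through cokernels in this way.
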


\begin{proof}
Let $\varphi\!:A\to B$ be a homological epimorphism. Consider the (unnormalized) bimodule bar resolution of $A$ \cite[Section III.2.3]{X1}. It has the form
\begin{equation}
\label{barA}
0 \leftarrow A \xleftarrow{\mu_A} A\mathop{\widehat\otimes} A \xleftarrow{d_1} A\mathop{\widehat\otimes} A\mathop{\widehat\otimes} A \xleftarrow{d_2} \cdots
\leftarrow A^{\mathop{\widehat\otimes} n} \leftarrow \cdots,
\end{equation}
where $\mu_A$ is the multiplication map and $d_n$, $n\in\mathbb{N}$ are 
is given by standard formulas. 
Applying $B\ptens{A}(-)\ptens{A}B$ to this resolution  we get the sequence
\begin{equation*}
0 \leftarrow B \xleftarrow{\mu_{B}} B\mathop{\widehat\otimes} B \xleftarrow{\widetilde d_1} B\mathop{\widehat\otimes} A\mathop{\widehat\otimes} B \xleftarrow{\widetilde d_2} \cdots,
\end{equation*}
which is admissible since $\varphi$ is a homological epimorphism. Hence $\Coker \widetilde d_2\to \Ker\mu_B$ is a topological isomorphism.

Note that $\Ker\mu_A\cong \Omega^1A$ and $\Ker\mu_B\cong \Omega^1B$
\cite[Proposition 7.2]{Pir_qfree} and, moreover, $\Coker  d_2\cong \Omega^1A$ because~\eqref{barA} is admissible. Since the functor of relative tensor product preserves cokernels \cite[Proposition 3.3]{Pir_flcyc}, we have that  $B\ptens{A} \Omega^1 A  \ptens{A} B \to \Omega^1 B$ defined in~\eqref{oAoB} is a $B$-$\mathop{\widehat\otimes}$-bimodule isomorphism.  Applying Proposition~\ref{GUDEB} with $R=S=\mathbb{C}$, we conclude that $\varphi$  satisfies Property (UDE).
\end{proof}

Now we can prove the main result of the section.

\begin{proof}[Proof of Theorem~\ref{Assmpr}]
Recall that we denote by $\be$ the Hopf $\mathop{\widehat\otimes}$-algebra homomorphism $\mathbb{C}[z]\to K$.
Put $A=R{\mathop{\#}} \mathbb{C}[z]$, $B=S{\mathop{\widehat{\#}}} K$, $g=\al$ and $f=\al\mathop{\widehat{\#}}\be$ (the latter is well defined by Lemma~\ref{homsmpr} since $\al$ is a $\mathbb{C}[z]$-module morphism). It suffices to check the  conditions in Theorem~\ref{PiTh}, i.e.,

(1)~$g$ is a homological epimorphism;

(2)~$f$ is a left or right relative homological epimorphism;

(3)~$A$ is projective in $R{\mbox{-}\!\mathop{\mathsf{mod}}}$ and $B$ is projective in $S{\mbox{-}\!\mathop{\mathsf{mod}}}$;

(4)~$A$ is an $R$-$\mathop{\widehat\otimes}$-algebra of $(f_2)$-finite type.

Condition~(1) holds by the hypothesis.

Condition~(2) follows from Theorem~\ref{Brelhoep}.
Indeed, since $\al$ and $\be$ are homological epimorphisms, they satisfy Property (UDE)   by  Theorem~\ref{homeUDE}. Furthermore, Theorem~\ref{UDEinh} implies that $f$ also satisfies Property (UDE). Since $R$ is a $\mathbb{C}[z]$-module algebra, we have that the action is induced by a derivation. Therefore $R{\mathop{\#}} \mathbb{C}[z]$ is an Ore extension (see Remark~\ref{OeAsp}) and hence it is relatively quasi-free by Part~(A) of Proposition~\ref{Oeqfft}.  Thus all the conditions in Theorem~\ref{Brelhoep} hold and therefore $f$ is a two-sided relative homological epimorphism. Finally, note that every two-sided relative homological epimorphism is a one-sided relative homological epimorphism \cite[Proposition 6.5]{Pir_qfree}.

To verify Condition~(3) note that  $S{\mathop{\widehat{\#}}} K$ is isomorphic to $S\mathop{\widehat\otimes} K$ as an $S$-$\mathop{\widehat\otimes}$-module and so it is projective being a free module (see, e.g., \cite[Chapter III, Proposition 1.25]{X1}). Similarly, $R{\mathop{\#}} \mathbb{C}[z]$ is a projective $R$-module.

Finally, Condition~(4) holds by Part~(B) of Proposition~\ref{Oeqfft}.
\end{proof}

\subsection*{The proof of the main theorem}
Now we are in a position to prove the central result of the paper.

\begin{proof}[Proof of Theorem~\ref{AMheiffsol}]
For the necessity see \cite[Theorem~3.6]{Pi4}.

To show the sufficiency  consider the decomposition in~\eqref{expfsmp} from Theorem~\ref{AMede}. We claim that the truncated homomorphism
\begin{equation}\label{truAMe}
(\cdots (U(\mathfrak{f}_1) \mathop{\#}U(\mathfrak{f}_2))\mathop{\#} \cdots)\mathop{\#} U(\mathfrak{f}_ k)\longrightarrow(\cdots (\mathfrak{A}_{i_1} \mathop{\widehat{\#}}\mathfrak{A}_{i_2})
\mathop{\widehat{\#}}\cdots)\mathop{\widehat{\#}}\mathfrak{A}_{i_k}
\end{equation}
(which exists since the decompositions of $U(\mathfrak{g})$ and $\widehat U(\mathfrak{g})$ are compatible)
is a homological epimorphism for every $k=1,\ldots,n$.

Proceeding by induction, note first that $U(\mathfrak{f}_k)\cong \mathbb{C}[z]$ for every $k$.
If $k=1$, then the assertion is given by Proposition~\ref{CzAshome}.

Assume now that the claim holds for $k-1$.
Then the homomorphism in \eqref{truAMe} can be written as $$\al\mathop{\widehat{\#}}\be\!:R{\mathop{\#}} \mathbb{C}[z]\to S{\mathop{\widehat{\#}}} \mathfrak{A}_{i_k},$$ where $R$ and $S$ denote the algebras obtained on the $(k-1)$th step. Here  $S$ is a $\mathfrak{A}_{i_k}$-$\mathbin{\widehat{\otimes}}$-module algebra, $\be\!:\mathbb{C}[z]\to \mathfrak{A}_{i_k}$ is a Hopf $\mathbin{\widehat{\otimes}}$-algebra homomorphism and
$\al\!:R\to S$ is a $\mathbin{\widehat{\otimes}}$-algebra homomorphism that is a $\mathbb{C}[z]$-module morphism.

Note that $R$ has countable dimension and each of $\al$ and $\be$ has dense range. Moreover, $\al$ and $\be$ are homological epimorphisms (the  first by the induction hypothesis and  the second  by Proposition~\ref{CzAshome}). Thus all the assumptions in Theorem~\ref{Assmpr} hold and hence $R{\mathop{\#}} \mathbb{C}[z]\to S{\mathop{\widehat{\#}}} \mathfrak{A}_{i_k}$ is also a homological epimorphism. The claim is proved.

Finally, putting $k=n$ we conclude that $U(\mathfrak{g})\to \widehat U(\mathfrak{g})$ is a homological epimorphism.
\end{proof}

\subsection*{Acknowledgment}
A part of this work was done during the author's visit to the HSE University (Moscow) in the winter of 2024.  I wish to thank this university, and in particular Aleksei Pirkovskii, for the hospitality. I would like to expresses my gratitude to the referees for their suggestions that improve the presentation, as well as for pointing out inaccuracies in some arguments, especially in the proof of Proposition~\ref{pushoex}. Vladimir Al.~Osipov's valuable comments are also appreciated.


\begin{thebibliography}{99}

\bibitem{Ak08}
S.\,S.~Akbarov, \emph{Holomorphic functions of exponential type and duality for Stein groups with algebraic
connected component of identity},   J. Math. Sci., 162:4 (2009), 459--586.

\bibitem{ArAMN}
O.\,Yu.~Aristov, \emph{Arens-Michael envelopes of nilpotent Lie
algebras, functions of exponential type, and homological
epimorphisms}, Trans. Moscow Math. Soc. (2020), 97--114.

\bibitem{ArAnF}
O.\,Yu.~Aristov, \emph{Holomorphic functions of exponential type on connected complex Lie groups}, J. Lie Theory 29:4 (2019), 1045--1070.

\bibitem{ArRC}
O.\,Yu.~Aristov, \emph{The relation ``Commutator equals function'' in Banach algebras},  Math. Notes, 109:3 (2021), 323--334.

\bibitem{ArHR}
O.\,Yu.~Aristov, \emph{On holomorphic reflexivity conditions for complex Lie groups}, Proc. Edinb. Math. Soc.,  64:4  (2021). 800--821.

\bibitem{ArPiLie}
O.\,Yu.\,Aristov, \emph{When a completion of the universal enveloping algebra is a Banach PI-algebra?}, Bull. Aust. Math. Soc, 107:3 (2023), 493--501.

\bibitem{ArLfd}
O.\,Yu.\,Aristov,
\emph{Length functions exponentially distorted on subgroups of complex Lie groups},  European Journal of Mathematics, 9, (2023), 60.

\bibitem{AHHFG}
O.\,Yu.~Aristov, \emph{Holomorphically finitely generated Hopf algebras and quantum Lie groups}, Moscow Math. J. 24:2 (2024) 145--180.

\bibitem{Ar_smash}
O.\,Yu.\,Aristov, \emph{Decomposition of the algebra of analytic functionals on a connected complex Lie group and its completions into iterated analytic smash products},  Algebra i Analiz 36:4 (2024) in print  (Russian), arXiv: 2209.04192; English transl. to appear in St. Petersburg Math.~J.

\bibitem{AP}
O.\,Yu.~Aristov, A.\,Yu.~Pirkovskii, \emph{Open embeddings and pseudoflat epimorphisms}, J. Math. Anal. Appl. 485 (2020) 123817.

\bibitem{BBK18}
F.\,Bambozzi, O.\,Ben-Bassat, K.\,Kremnizer, \emph{Stein domains in Banach algebraic geometry}, J.
Funct. Anal. 274 (2018), no. 7, 1865--1927.

\bibitem{BerDic}
 G.\,M.~Bergman, W.\,Dicks \emph{Universal derivations and universal ring
constructions}, Pacific J. Math. 79 (1978), 293--337.

\bibitem{Co85}
A.\,Connes, \emph{Non-commutative differential geometry}, Inst. Hautes \'{E}tudes Sci. Publ. Math.
(1985), 41--144.

\bibitem{Do03}
A.\,A.~Dosiev (Dosi), \emph{Homological dimensions of the algebra formed by entire functions of elements of a nilpotent Lie algebra}, Funct. Anal. Appl. 37:1, (2003) 61--64.

\bibitem{Do09}
A.\,A.~Dosiev (Dosi), \emph{Local left invertibility for operator tuples and
noncommutative localizations}. J. K-Theory 4:1 (2009),  163--191.

\bibitem{GL}
W.\,Geigle, H.\,Lenzing,
\emph{Perpendicular categories with applications to representations and sheaves}.
J.~Algebra 144:2 (1991), 273--343.

\bibitem{Gl12}
H.\,Gl\"{o}ckner, \emph{Continuity of bilinear maps on direct sums
of topological vector spaces}, J. Funct. Anal. 262 (2012), 2013--2030.

\bibitem{X1}
A.\,Ya.~Helemskii,
\emph{The homology of Banach and topological algebras}.
Mathematics and its Applications (Soviet Series), 41.
Kluwer Academic Publishers Group, Dordrecht, 1989.

\bibitem{Kel}
J.\,L.~Kelley  \emph{General topology}, Van Nostrand, New York, Toronto, London, 1955.

\bibitem{Ku06}
Yu.\,V.~Kuzmin, \emph{Homological group theory}, Adv. Stud. Math. Mech. V.~1, Factorial Press,  Moscow, 2006, in Russian.

\bibitem{Lit}
G.\,L.~Litvinov, \emph{Group representations in locally convex spaces, and topological group
algebras}, Selecta Math. Soviet. (Birkh\"{a}user Publ.) 7:2 (1988), 101--182.

\bibitem{MR87}
J.\,C.~McConnell, J.\,C.~Robson,  \emph{Noncommutative Noetherian rings}, John Wiley, Chichester, 1987.

\bibitem{Me04}
R.\,Meyer, \emph{Embeddings of derived categories of bornological modules},
arXiv:041059  (2004).

\bibitem{Pi99}
A.\,Yu.~Pirkovskii, \emph{On certain homological properties of Stein algebras}, Functional analysis, 3. J.~Math. Sci. (NewYork) 95:6 (1999),  2690--2702.

\bibitem{Pi4}
A.\,Yu.~Pirkovskii, \emph{Arens--Michael enveloping algebras and analytic smash
products}, Proc. Amer. Math. Soc. 134 (2006), 2621--2631.

\bibitem{Pir_stbflat}
A.\,Yu.~Pirkovskii, \emph{Stably flat completions of universal enveloping algebras},
Dissertationes Math. (Rozprawy Math.) 441 (2006), 1--60.

\bibitem{Pir_qfree}
A.\,Yu.~Pirkovskii, \emph{Arens-Michael envelopes, homological epimorphisms, and relatively
quasi-free algebras}, Trans. Moscow Math. Soc. 2008, 27--104.

\bibitem{Pir_flcyc}
A.\,Yu.~Pirkovskii, \emph{Flat cyclic Fr\'{e}chet modules, amenable Fr\'{e}chet
algebras, and approximate identities}. Homology, Homotopy and Applications
11:1 (2009), 81--114.

\bibitem{Pi15}
A.\,Yu.~Pirkovskii, \emph{Holomorphically finitely generated
algebras}, J. Noncommutative Geom. 9:1 (2015), 215--264.

\bibitem{Pir_co1}
A.\,Yu.~Pirkovskii, \emph{A note on relative homological epimorphisms of topological algebras},
arXiv:2104.13716.

\bibitem{Pir_co2}
A.\,Yu.~Pirkovskii, \emph{Letter to the editors}, Trans. Moscow Math. Soc., 82 (2021), 327--328.

\bibitem{Sw69}
M.\,E.\,Sweedler, \emph{Hopf algebras},  Benjamin, N.Y., 1969.


\bibitem{T2}
J.\,L.~Taylor, \emph{A general framework for a multi-operator functional calculus},
Adv. Math. 9 (1972), 183--252.

\bibitem{Va01}
C.\,Valqui, \emph{Universal extension and excision for
topological algebras}, K-Theory, 22 (2001), 145--160.

\end{thebibliography}
\end{document}